\newtheorem{definition}{Definition}
\newtheorem{theorem}{Theorem}[section]
\newtheorem{lemma}[theorem]{Lemma}
\newtheorem{corollary}[theorem]{Corollary}
\newtheorem{proposition}[theorem]{Proposition}
\newtheorem{varexample}[theorem]{Example}
\newcommand{\PP}{\mathbb{P}\,}
\newcommand{\LL}{\mathcal{L}\,}
\newcommand{\OO}{\mathcal{O}\,}
\newcommand{\II}{\mathcal{I}\,}
\newcommand{\Z}{\mathbb{Z}\,}
\begin{document}
\title{Rational Fibrations of $\overline{M}_{5,1}$ and $\overline{M}_{6,1}$}
\author{David Jensen}
\email{djensen@math.sunysb.edu}
\address{Stony Brook University\\Department of Mathematics\\Stony Brook, NY 11794-3651}
\date{}
\bibliographystyle{alpha}
\begin{abstract}
We construct rational maps from $\overline{M}_{5,1}$ and $\overline{M}_{6,1}$ to lower-dimensional moduli spaces.  As a consequence, we identify geometric divisors that generate extremal rays of the effective cones for these spaces.
\end{abstract}
\maketitle

\section{Introduction}

The moduli spaces of curves $\overline{M}_{g,n}$ are among the most important objects in algebraic geometry.  One natural approach to studying these spaces could be to construct morphisms from these moduli spaces to other, simpler spaces.  There is, however, a problem with this approach -- the moduli spaces of curves do not admit any non-trivial morphisms to lower dimensional varieties.  In particular, it has been shown in \cite{GKM} that every fibration (with connected fibers) of $\overline{M}_{g,1}$ factors through the map $\overline{M}_{g,1} \to \overline{M}_g$ given by forgetting the marked point.  We therefore cannot expect to learn much about the geometry of $\overline{M}_{g,1}$ if we restrict our attention to such morphisms.  We obtain a much richer picture, however, if we consider more generally rational maps.

Many questions about the rational maps of a variety $X$ can be restated in terms of the cone of effective divisors $\overline{NE}^1 (X)$.  For any divisor $D$, we may define the section ring
$$ R(X,D) = \bigoplus_{n \in \mathbb{N}} H^0 (X, \mathcal{O}_X (D)^{\otimes n} ). $$
If $D$ is effective and $R(X,D)$ is finitely generated, then there is an induced rational map
$$f_D : X \dashrightarrow Proj(R(X,D)).$$
In many cases, $\overline{NE}^1 (X)$ provides us with a nice combinatorial parameterization of the variety's rational contractions.

In this paper, we exhibit rational contractions from $\overline{M}_{5,1}$ and $\overline{M}_{6,1}$ to moduli spaces of pointed rational curves.  As a consequence, we identify extremal rays of the corresponding effective cones.  We note that this differs from the standard approach to finding extremal rays, which is to construct \textit{birational} contractions.

The study of $\overline{NE}^1 ( \overline{M}_g )$ was pioneered by Eisenbud, Harris and Mumford in their proof that $\overline{M}_g$ is of general type for genus $g \geq 24$ \cite{HMum} \cite{HE}.  A key step in their proof is the computation of the class of certain geometric divisors on $\overline{M}_g$.  In particular, their argument makes use of the Brill-Noether divisors, which are defined whenever $g+1$ is composite.  For the remaining genera, they complete the proof using a different set of divisors, known as the Gieseker-Petri divisors.  Part of the motivation for studying these divisors was the observation that, for small values of $g$, these divisors play a special role -- for $g \leq 11$, $g \neq 10$, there is an extremal ray of $\overline{NE}^1 ( \overline{M}_g )$ generated by either a Brill-Noether or Gieseker-Petri divisor (see Remark 2.17 in \cite{FarkasGG}).

In \cite{Logan}, Logan introduced the notion of \textbf{pointed Brill-Noether divisors}.  In Theorem \ref{Genus5Theorem}, we show that one of these divisors generates an extremal ray of $\overline{NE}^1 ( \overline{M}_{5,1} )$.

\begin{definition}
Let $Z = (a_0 , \ldots , a_r )$ be an increasing sequence of nonnegative integers with $\alpha = \sum_{i=0}^r a_i - i$.  Let $BN^r_{d,Z}$ be the closure of the locus of pointed curves $(C,p) \in M_{g,1}$ possessing a $g^r_d$ on $C$ with ramification sequence $Z$ at $p$.  When $g+1 = (r+1)(g-d+r)+ \alpha$, this is a divisor in $\overline{M}_{g,1}$, called a \textbf{pointed Brill-Noether divisor}.
\end{definition}

Our first result is:

\begin{theorem}
\label{Genus5Theorem}
The Weierstrass divisor $BN^1_{5,(0,5)}$ generates an extremal ray of $\overline{NE}^1 ( \overline{M}_{5,1} )$.
\end{theorem}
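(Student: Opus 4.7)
The plan is to argue extremality by the moving-curve technique. Specifically, I would exhibit a covering one-parameter family $\{B_t\}$ of curves in $\overline{M}_{5,1}$ with $B_t \cdot BN^1_{5,(0,5)} = 0$, and then verify that the face of $\overline{NE}^1(\overline{M}_{5,1})$ cut out by $\{B_t\}$ is precisely the ray spanned by $[BN^1_{5,(0,5)}]$.

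Following the paper's stated strategy, the covering family will be obtained as the fibers of a rational fibration $\phi : \overline{M}_{5,1} \dashrightarrow Y$, where $Y$ is a moduli space of pointed rational curves. A natural Brill-Noether-theoretic candidate uses the marked point to produce a distinguished pencil: for general $(C,p) \notin BN^1_{5,(0,5)}$, Riemann-Roch gives $h^0(6p) = 2$, and the resulting $g^1_6$ is base-point-free and totally ramified at $p$; one sends $(C,p)$ to the branch divisor of the associated degree-$6$ cover $C \to \PP^1$, which lies in a suitable quotient of $\overline{M}_{0,16}$. Along the Weierstrass divisor, the pencil $|5p|$ forces a second, index-$5$ ramification point to appear, so the branch data lies in a lower-dimensional stratum, and $\phi$ is constructed so as to contract this degeneration (possibly after further identifications to cut the target dimension below $13$).

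Given $\phi$, let $B_t$ be the class of a general fiber (or a suitable moving curve class supported on fibers). Because $B_t$ moves in a family covering a dense open subset of $\overline{M}_{5,1}$, we have $B_t \cdot D \geq 0$ for every pseudo-effective $D$. Combining this with Logan's formula for $[BN^1_{5,(0,5)}]$ in the basis $\{\lambda, \psi, \delta_{\mathrm{irr}}, \delta_1, \delta_2, \delta_3, \delta_4\}$ of $\mathrm{Pic}(\overline{M}_{5,1})_{\mathbb{Q}}$, I would verify $B_t \cdot BN^1_{5,(0,5)} = 0$. To upgrade ``boundary of the cone'' to ``extremal ray'', I would pair against a complementary collection of test curves---sections of the forgetful map $\overline{M}_{5,1} \to \overline{M}_5$ over known curves in $\overline{M}_5$, curves obtained by attaching elliptic or rational tails at the marked point, and one-parameter families in the boundary strata $\delta_i$---and show that the face of $\overline{NE}^1(\overline{M}_{5,1})$ annihilated by $B_t$ is one-dimensional.

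The main obstacle is the construction of $\phi$: producing a rational fibration whose exceptional locus is supported exactly on $BN^1_{5,(0,5)}$ (modulo boundary) and verifying this description via a careful limit-linear-series analysis along the Weierstrass locus. Once $\phi$ is in hand, the remaining intersection-theoretic verifications and the rigidity check should be routine.
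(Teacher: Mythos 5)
Your plan has two genuine gaps, one in the construction and one in the logic of extremality. On the construction: the branch-divisor map you sketch is not a fibration in the needed sense. For general $(C,p)$ the pencil $|6p|$ gives a degree-$6$ cover with total ramification $20$ by Riemann--Hurwitz, so the branch data (together with the image of $p$) lives in a $13$-dimensional space of pointed rational curves, which exceeds $\dim \overline{M}_{5,1}=12$; and by the Riemann existence theorem a cover is determined by its branch points up to finite monodromy data, so the map is generically finite onto its image. Generically finite maps have no positive-dimensional general fibers, hence no covering family $\{B_t\}$ of fiber curves, and the hedge ``after further identifications'' is exactly the missing construction. The paper instead builds a very different map $\phi_5:\overline{M}_{5,1}\dashrightarrow \overline{M}_{0,4}$ from the geometry of the canonical genus-$5$ curve on a quartic del Pezzo surface (tangent line, osculating hyperplane, two blow-ups at $p$), and the hard work is a GIT argument showing $\phi_5$ factors as a birational contraction followed by a morphism.

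On the logic: even granting a fibration, a covering curve class $B_t$ with $B_t\cdot BN^1_{5,(0,5)}=0$ only places the divisor on a supporting-hyperplane face of $\overline{NE}^1(\overline{M}_{5,1})$, a cone of Picard rank $7$ that is not known. Pairing against finitely many additional test curves can show that \emph{known} effective classes on that face are proportional to $BN^1_{5,(0,5)}$, but it cannot show the face of the full (unknown) pseudoeffective cone is one-dimensional; elements of $\overline{NE}^1$ are limits of effective divisors and need not be represented by codimension-one subvarieties you can intersect meaningfully. This is precisely the difficulty the paper's Proposition \ref{Negativity of Fibration} is designed to overcome: it proves extremality from the existence of \emph{two distinct irreducible} divisors with the same lower-dimensional image under a map that decomposes as a birational contraction and a morphism, using Lemmas \ref{Negativity of Contraction} and \ref{Limits of Divisors} to control arbitrary limit classes. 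The paper then shows $BN^1_{4,(0,3)}$ is a pullback of an ample divisor via Logan's relation $BN^1_{4,(0,3)}=BN^1_{5,(0,5)}+BN^1_3$, so that $BN^1_{5,(0,5)}$ and $BN^1_3$ are distinct irreducible divisors mapping to the same point of $\overline{M}_{0,4}$, and extremality follows. To repair your approach you would either need an argument of this type, or a covering family of curves lying \emph{inside} the Weierstrass divisor with strictly negative intersection with it; a covering family of the whole space with zero intersection is not enough.
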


On $\overline{M}_{6,1}$, we define a divisor not of pointed Brill-Noether type, the divisor of ``nodes of $g^2_6$'s''.

\begin{definition}
Let $D_6$ be the closure of the locus of pointed curves $(C,p) \in M_{6,1}$ possessing a $g^2_6$ $\mathcal{L}$ and a point $p' \in C$ such that
$$h^0 (C, \mathcal{L} - p - p' ) \geq 2.$$
\end{definition}

As far as we are aware, the divisor $D_6$ does not appear earlier in the literature.  Its numerical class has recently been computed by Nicola Tarasca in \cite{Tarasca}.  We prove the following:

\begin{theorem}
$D_6$ generates an extremal ray of $\overline{NE}^1 ( \overline{M}_{6,1} )$.
\end{theorem}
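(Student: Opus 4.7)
The plan is to mirror the template used for Theorem \ref{Genus5Theorem}: I would build an explicit rational contraction $f : \overline{M}_{6,1} \dashrightarrow Y$ to a moduli space $Y$ of pointed rational curves in whose exceptional locus the divisor $D_6$ lies, and then close the argument with a covering-curve computation.

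\textbf{Constructing $f$ and showing it contracts $D_6$.} A general genus $6$ curve carries exactly five $g^2_6$'s, or equivalently five residual pencils $A = K_C - L$ of degree $4$. For a general $(C, p) \in M_{6,1}$, each such $L$ embeds $C$ as a four-nodal plane sextic $\phi_L(C) \subset \PP^2$, and projection from $\phi_L(p)$ produces a degree-$5$ pencil on $C$ whose branch locus, viewed as an unordered subset of $\PP^1$ marked at the image of $p$, determines a point in an appropriate moduli space of pointed genus-$0$ curves. Assembling this construction $S_5$-equivariantly over the five $g^2_6$'s and taking the quotient yields $Y$ and $f$. A general point of $D_6$ is $(C, p)$ such that $h^0(L - p - p') \geq 2$ for some $g^2_6$ $L$ and some $p' \in C$, i.e., $p$ and $p'$ are the two preimages of a node of $\phi_L(C)$; the branch data attached by $f$ depends only on $(C, L, p')$ and is symmetric in the choice between $p$ and $p'$ among the node-preimages. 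Interpolating this exchange in a one-parameter family -- for instance, via a pencil of node-pairs in the universal $g^2_6$-family over $M_6$, possibly modulo boundary corrections -- produces curves $B \subset D_6$ contracted by $f$.

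\textbf{Covering-curve conclusion.} As the ambient data $(C, L)$ varies, the curves $\{B\}$ sweep out $D_6$. Using the formula for the class of $D_6$ recently computed by Tarasca in \cite{Tarasca}, a direct intersection calculation gives $D_6 \cdot B < 0$. Hence any effective divisor $E$ on $\overline{M}_{6,1}$ with $E \cdot B < 0$ must contain $B$; and since $\{B\}$ covers $D_6$, $E$ must contain $D_6$ as a component. This forces $D_6$ to span an extremal ray of $\overline{NE}^1(\overline{M}_{6,1})$.

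The main obstacle is the construction of $f$: identifying $Y$ precisely, giving an $S_5$-invariant and birationally well-defined assignment $(C, p) \mapsto$ branch data, and verifying that the resulting map has connected general fiber so that the rational-contraction framework really applies. A secondary but still delicate step is producing a genuine covering family of $D_6$ that is contracted by $f$ (rather than being swept out by ruled boundary strata) and then checking with Tarasca's formula that $D_6 \cdot B$ is strictly negative, not merely non-positive.
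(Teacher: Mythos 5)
Your plan diverges from the paper at the decisive step, and the pieces you leave open are exactly the ones that carry the proof. The paper's argument is: the general genus $6$ curve sits in the quintic del Pezzo surface $Y$ as an anticanonical-square section, uniquely up to $\mathrm{Aut}(Y)$, which gives $\phi_6 : \overline{M}_{6,1} \dashrightarrow Y/S_5 \cong \widetilde{M}_{0,5}$; the first factor $\overline{M}_{6,1} \dashrightarrow Z/S_5$ (with $Z$ the universal $|-2K_Y|$-curve) is shown to be a birational contraction, so $\phi_6$ is a birational contraction followed by a morphism; and then one shows that \emph{two distinct} irreducible divisors, $D_6$ and the Gieseker--Petri divisor $GP_6$, have the same non-dominant image $\Delta$ (Proposition \ref{Genus 6 Result}). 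Proposition \ref{Negativity of Fibration} then yields extremality with no intersection-number computation and no need for Tarasca's class. The second divisor $GP_6$ is the engine of that proposition: it is what produces the contradiction ($C\cdot \widetilde{D_2} > 0$ against $C\cdot e^*D = 0$) that your proposal tries to replace by the inequality $D_6\cdot B < 0$.

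Concretely, three gaps. First, your map $f$ is not pinned down and is unlikely to serve: the branch divisor of the degree-$5$ projection of a plane sextic consists of $20$ points of $\PP^1$, so ``branch data modulo $\mathrm{PGL}(2)$'' lives in a space of dimension $17 > 16 = \dim \overline{M}_{6,1}$; this is not visibly a fibration to a lower-dimensional target, and you never verify the structural hypothesis (birational contraction composed with a proper morphism) under which the rational-contraction formalism, or any pullback statement like $\phi^*\Delta$ being effective with controlled exceptional coefficients, is available. Second, the covering family $B \subset D_6$ is only sketched (``interpolating the exchange of node-preimages, possibly modulo boundary corrections''); nothing guarantees such a family exists, covers $D_6$, and is contracted. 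Third, and most seriously, the inequality $D_6\cdot B < 0$ is asserted, not proved, and strict negativity is precisely what is in doubt: a curve contracted by a fibration pairs to zero with pulled-back divisors, so $D_6\cdot B<0$ amounts to exhibiting another effective component of the pullback of the boundary (such as $GP_6$) meeting $B$ positively --- which is the paper's key geometric input, absent from your argument --- or else to carrying out the computation with Tarasca's class, which you do not do. Your closing lemma (a divisor covered by curves pairing negatively with it is rigid and extremal) is a correct standard criterion, provided one also handles pseudoeffective classes as limits of divisors (cf. Lemma \ref{Limits of Divisors}); but as written the proposal is a program whose hardest steps are acknowledged rather than executed.
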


Our strategy is to construct geometrically meaningful rational maps from $\overline{M}_{g,1}$ to other moduli spaces.  Our results follow by examining the images of divisors under these maps.  We describe the maps here.

The general genus 6 curve admits an embedding into a smooth quintic del Pezzo surface $Y$ as a section of $\vert -2K_Y \vert$.  This embedding is unique up to an automorphism of the surface.  By forgetting the curve and simply remembering the marked point, we obtain a rational map
$$ \phi_6 : \overline{M}_{6,1} \dashrightarrow Y/S_5 \cong \widetilde{M}_{0,5} .$$

The general genus 5 curve $C$ admits a canonical embedding into $\PP^4$ as the complete intersection of 3 quadrics.  For any point $p \in C$, the set of quadrics containing both $C$ and the tangent line to $C$ at $p$ forms a 2-dimensional vector space.  Let $Z$ be the intersection of all the quadrics in this space, which is a degree 4 del Pezzo surface.  Now, let $H \subseteq \PP^4$ be the osculating hyperplane to $C$ at $p$, and consider the intersection $H \cap Z$.  Since $T_p C \subset H \cap Z$, we may write $H \cap Z$ as the union of two components $T_p C \cup R$, where $R$ is generically a twisted cubic in $H$.

Notice that, since $C$ is a complete intersection of quadrics, it has no trisecants, and thus the intersection multiplicity of $C$ and $T_p C$ at $p$ on $Z$ must be 2.  Since $H$ intersects $C$ at $p$ with order of vanishing 4 or more, we see that $R$ must be tangent to $C$.  The three curves $C$, $R$, and $T_p C$ all therefore have the same tangent direction at $p$.  If we blow up $Z$ at $p$, the strict transforms of all three curves will pass through the same point on the exceptional divisor $E$.  If we then blow up again at this point, the new exceptional divisor will be a $\PP^1$ with 4 marked points on it -- namely, the points of intersection of this $\PP^1$ with the strict transforms of $C$, $R$, $E$, and $T_p C$.  In this way, we obtain a rational map
$$ \phi_5 : \overline{M}_{5,1} \dashrightarrow \overline{M}_{0,4} \text{       (see Figure \ref{Genus 5 Map}).}$$

\begin{figure}[!htb]
\includegraphics[scale=.9]{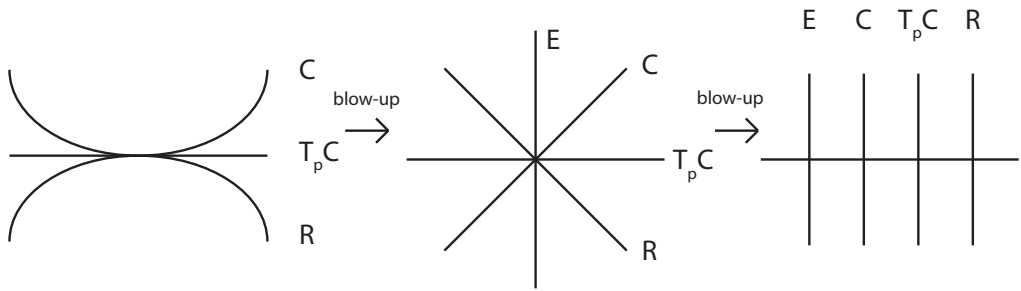}
\caption{The map $\phi_5$}
\label{Genus 5 Map}
\end{figure}

It is natural to ask to what extent our techniques generalize.  In an earlier paper we identify extremal rays of $\overline{M}_{g,1}$ generated by pointed Brill-Noether divisors when $g=3,4$ \cite{Genus3and4}.  In each case the maps we construct are specific to the genus, but pointed Brill-Noether divisors exist in every genus and it is possible that there is always a corresponding extremal ray.  There are higher-genus analogues of the divisor $D_6$ as well, as discussed in \cite{Tarasca}.  More generally, we expect that Proposition \ref{Negativity of Fibration} may have several applications, as it gives a new method for identifying extremal rays.

The outline of the paper is as follows.  In Section 2 we develop tools for studying the birational geometry of a variety by considering a certain type of rational fibration.  In Sections 3 and 4, we use these tools to obtain our results on $\overline{M}_{6,1}$ and $\overline{M}_{5,1}$, respectively.  While the results in Section 3 follow directly, in Section 4 we must show that the map $\phi_5$ decomposes as the composition of a birational contraction and a morphism.  In order to show this, we use techniques from geometric invariant theory.

Notation:  Following \cite{Rulla}, we refer to any element of $\overline{NE}^1 (X)$ as an effective divisor.  To avoid confusion, we use the term ``codimension one subvariety'' where applicable.

\textbf{Acknowledgements:}  This work was prepared as part of my doctoral dissertation under the direction of Sean Keel.  I am grateful to him for his numerous suggestions and ideas.  I would also like to thank Gavril Farkas, Brendan Hassett, David Helm and Eric Katz for several helpful conversations.

\section{Rational Fibrations}

In this section, we develop tools for studying effective cones.  One well-known such technique involves the construction of birational maps (see, for example, \cite{Rulla}).  This is because the exceptional divisors of a birational contraction $X \dashrightarrow Y$ generate extremal rays of $\overline{NE}^1 (X)$.  This idea has been exploited by many authors in the study of various moduli spaces, including $\overline{M}_g$ \cite{Rulla}, $\overline{M}_{0,n}$ \cite{CT}, and the Kontsevich space of stable maps \cite{CHS}.  In our case, however, the maps we construct are not birational -- they in fact have very low-dimensional images.  In what follows, we will see that extremal rays can be similarly obtained from these rational fibrations.  We begin with a few definitions, which can all be found in \cite{HK}.

\begin{definition}
Let $f: X \dashrightarrow Y$ be a rational map between normal projective varieties.  Let $(p,q) : W \to X \times Y$ be a resolution of $f$ with $W$ projective and $p$ birational.  We call $f$ a \textbf{birational contraction} if it is birational and every $p$-exceptional divisor is $q$-exceptional.  More generally, we say that $f$ is a \textbf{contraction} if every $p$-exceptional divisor is $q$-fixed.  For a $\mathbb{Q}$-Cartier divisor $D \subset Y$, we define $f^* (D)$ to be $p_* (q^* (D))$.
\end{definition}

Examples of contractions include morphisms, birational contractions, and compositions of these.  Rational contractions are exactly the maps corresponding to complete linear series.  Such maps are often useful for identifying extremal rays.  One example is the statement about birational maps alluded to above.  This fact is a direct consequence of the following, known variously as the ``Kodaira Lemma'' or ``Negativity of Contraction''.

\begin{lemma}  (See \cite{Kollar}, \cite{Rulla})
\label{Negativity of Contraction}
Let $f: X \to Y$ be a proper birational morphism, with $X$ regular in codimension one.  Let $\{ E_i \}_{i=1}^n$ be a collection of $f$-exceptional divisors, $M$ an $f$-nef $\mathbb{Q}$-Cartier divisor on $X$, and $P \in \overline{NE}^1 (X)$ any element such that there is a sequence of $\mathbb{Q}$-Cartier divisors $\{ D_j \} \to P$ such that no $E_i$ is contained in the support of any $D_j$.  If $L$ is a $\mathbb{Q}$-Cartier divisor on $Y$ such that
$$ f^* L = M + P + \sum_{i=1}^n e_i E_i ,$$
then $e_i \geq 0$ $\forall i$.
\end{lemma}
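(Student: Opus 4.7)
The strategy is to reduce to the classical fact that the intersection matrix of the exceptional curves of a birational morphism of smooth surfaces is negative definite, via a hyperplane-slicing argument. I would cut $X$ by a complete intersection of $\dim X - 2$ very general sections of a very ample linear system to obtain a smooth surface $S$ on which $f|_S : S \to f(S)$ is a proper birational morphism of normal surfaces whose exceptional curves are the sliced divisors $\tilde E_i := E_i \cap S$. By Bertini, and by choosing $S$ generically with respect to the countable family of proper subvarieties $\mathrm{Supp}(D_j) \cap E_i \subset E_i$, one may further arrange that no $\tilde E_i$ lies in the support of any $D_j|_S$. The identity $f^* L = M + P + \sum_i e_i E_i$ then restricts to a relation of the same shape on $S$, with $M|_S$ still $f|_S$-nef and $\{D_j|_S\}$ still an effective sequence in $N^1(S)_{\mathbb{R}}$ converging to $P|_S$.

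Next, I intersect the restricted relation with each exceptional curve $\tilde E_j$. Since $\tilde E_j$ is contracted by $f|_S$, the pullback term vanishes; the $f$-nefness of $M$ gives $M|_S \cdot \tilde E_j \geq 0$; and the support condition combined with the effectivity of the $D_k$ gives $D_k|_S \cdot \tilde E_j \geq 0$ for every $k$, hence $P|_S \cdot \tilde E_j \geq 0$ in the limit. Rearranging the identity then yields $\sum_i e_i \tilde E_i \cdot \tilde E_j \leq 0$ for every $j$.

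Finally, I decompose $\sum_i e_i \tilde E_i = E^+ - E^-$ into its positive- and negative-coefficient parts, which share no components because the $\tilde E_i$ are distinct irreducible curves on a general slice. Intersecting with $E^-$ and using $E^+ \cdot E^- \geq 0$ yields $(E^-)^2 \geq 0$; negative-definiteness of the intersection form on the $f|_S$-exceptional curves then forces $E^- = 0$, i.e., $e_i \geq 0$ for all $i$. The main obstacle is executing the surface reduction cleanly: verifying that a sufficiently general $S$ realizes $f|_S$ as a genuine birational morphism of normal surfaces (using only the hypothesis that $X$ is regular in codimension one, so that $S$ avoids the codimension-two singular locus), and preserving both effectivity of the $D_j|_S$ and the crucial condition that no $\tilde E_i$ appears in their supports. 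Both points are standard Bertini arguments, harmless because the singular locus has codimension at least two and the sequence $\{D_j\}$ is countable.
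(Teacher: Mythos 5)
The paper does not prove this lemma at all—it cites \cite{Kollar} and \cite{Rulla}—so your proposal must be measured against the standard negativity-lemma argument, which it resembles in outline (slice to a surface, use negative definiteness of contracted curves) but not in the one step that actually matters. The genuine gap is your claim that for $S$ a complete intersection of $\dim X - 2$ very general very ample divisors on $X$, the morphism $f|_S : S \to f(S)$ has the curves $\tilde E_i = E_i \cap S$ as its exceptional curves. This fails whenever some center $f(E_i)$ has positive dimension, which is the typical situation: if $\dim E_i - \dim f(E_i) = 1$ (e.g.\ $f$ the blow-up of $\mathbb{P}^3$ along a line, $E$ the exceptional divisor), a very ample slice meets each fiber of $E_i \to f(E_i)$ in finitely many points, so $\tilde E_i$ maps \emph{finitely onto} a curve in $Y$ and is not contracted by $f|_S$ at all. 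Consequently the two pillars of your computation collapse: $f^*L|_S \cdot \tilde E_j = L \cdot (f|_S)_* \tilde E_j$ need not vanish, and the intersection matrix $(\tilde E_i \cdot \tilde E_j)$ is not negative definite (it is not even a matrix of contracted curves), so the Zariski-type decomposition $E^+ - E^-$ argument gives nothing.

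The standard repair is to slice using the base as well as the total space, and to treat the exceptional components one at a time (their centers have different dimensions, so no single surface slice works for all of them simultaneously). Fix a component $E_{i_0}$ with, say, $e_{i_0} < 0$, let $d = \dim f(E_{i_0}) \le \dim Y - 2$, cut by the pullbacks of $d$ general hyperplanes of $Y$ so that the center of $E_{i_0}$ becomes a finite set of points, and only then cut by general very ample divisors on $X$ down to a surface; now the slice of $E_{i_0}$ genuinely is contracted, and one argues as in the surface negativity lemma (equivalently, one produces a curve class moving in a family that covers $E_{i_0}$, is contracted by $f$, and has negative intersection with the negative part), using $f$-nefness of $M$ and $D_j \cdot C \ge 0$ for the moving curve $C \not\subset \mathrm{Supp}(D_j)$ exactly as you intended. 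Your Bertini/very-general bookkeeping for the countable family $\{D_j\}$ and for the codimension-two singular locus is fine; it is the preservation of exceptionality under slicing that your argument does not secure, and without it the proof does not go through.
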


One technical issue that arises when studying effective cones is that not every element of $\overline{NE}^1 (X)$ corresponds to a codimension one subvariety of $X$.  Instead, an element of this cone is a \textit{limit} of codimension one subvarieties.  A major tool used in this work for dealing with this issue is the following:

\begin{lemma}  (See \cite{Rulla})
\label{Limits of Divisors}
Let $X$ be a projective variety and suppose that $\sum_{i=1}^n \mathbb{R}^{\geq 0} E_i$ is a subcone of $\overline{NE}^1 (X)$ generated by codimension one subvarieties $E_i$.  Then for any $P \in \overline{NE}^1 (X)$, one can write
$$ P = Q + \sum_{i=1}^n c_i E_i $$
where $c_i \geq 0$ $\forall i$ and $Q$ is the limit of effective $\mathbb{Q}$-Cartier divisors $\{ D_j \}$ such that no $E_i$ is contained in the support of any $D_j$.
\end{lemma}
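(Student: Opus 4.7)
The plan is to take a sequence of effective $\mathbb{Q}$-Cartier divisors $D_j'$ with $[D_j'] \to P$ in $N^1(X)_{\mathbb{R}}$, peel off from each $D_j'$ its Weil-divisor components along the $E_i$, and then extract a convergent subsequence. Concretely, writing
\[
D_j' = D_j + F_j, \qquad F_j := \sum_{i=1}^n m_{ij} E_i,
\]
where $m_{ij}\geq 0$ is the coefficient of $E_i$ in $D_j'$ as a Weil divisor and $D_j$ contains no $E_i$ in its support, both $D_j$ and $F_j$ are effective. Assuming $X$ is $\mathbb{Q}$-factorial (which is the case for the moduli spaces considered later), $[D_j]$ and $[F_j]$ make sense in the finite-dimensional Néron--Severi space. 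If I can pass to a subsequence along which $[F_j] \to F_\infty$ and hence $[D_j] \to [P] - F_\infty =: [Q]$, then writing $F_\infty = \sum_i c_i [E_i]$ with $c_i \geq 0$ (possible because $F_\infty$ lies in the closed, finitely generated cone $C_1 := \sum_i \mathbb{R}^{\geq 0}[E_i]$, even when the $[E_i]$ are linearly dependent) gives exactly the decomposition $P = Q + \sum c_i E_i$ with $Q$ realized as a limit of effectives avoiding the $E_i$.

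The only real obstacle is boundedness of the classes $[F_j]$ in $N^1(X)_{\mathbb{R}}$. Note that since the $[E_i]$ may be linearly dependent, one cannot hope to bound the individual coefficients $m_{ij}$, so the target must be the class $[F_j]$ itself. My plan is a rescaling/contradiction argument: suppose along a subsequence $\|[F_j]\| \to \infty$. Then the unit vectors $\hat F_j := [F_j]/\|[F_j]\|$ all lie in the closed cone $C_1$, so a further subsequence converges to some $\hat F \in C_1$ with $\|\hat F\| = 1$. Since $[D_j']/\|[F_j]\| \to 0$, one gets $[D_j]/\|[F_j]\| \to -\hat F$; but each $[D_j]/\|[F_j]\|$ is effective and $\overline{NE}^1(X)$ is closed, so $-\hat F \in \overline{NE}^1(X)$. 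Since also $\hat F \in C_1 \subset \overline{NE}^1(X)$, both $\hat F$ and $-\hat F$ lie in the effective cone; intersecting with $A^{\dim X - 1}$ for any ample $A$ shows that $\overline{NE}^1(X)$ contains no line on a projective variety, forcing $\hat F = 0$ and contradicting $\|\hat F\| = 1$.

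With boundedness established, the finish is routine: extract a subsequence with $[F_j] \to F_\infty \in C_1$, write $F_\infty = \sum c_i [E_i]$ with $c_i \geq 0$, and set $[Q] := [P] - F_\infty = \lim [D_j]$. The hard part is really just the boundedness step above, where the salience of $\overline{NE}^1(X)$ is played against the fact that the $[E_i]$'s live in it, to rule out the rescaled limit $\hat F$ being nonzero.
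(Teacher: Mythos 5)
The paper does not prove this lemma at all (it is quoted from \cite{Rulla}), so there is no internal proof to compare against; I can only judge your argument on its own terms. Your overall strategy -- peel the $E_i$-components off an approximating sequence of effective divisors, establish boundedness of the peeled-off classes by a rescaling argument, then extract limits using that the finitely generated cone $\sum_i \mathbb{R}^{\geq 0}[E_i]$ is closed -- is sound, and the final bookkeeping (writing the limit $F_\infty$ as $\sum c_i [E_i]$ with $c_i \geq 0$ even when the $[E_i]$ are dependent, and taking $Q = \lim [D_j]$ along the subsequence) is correct.

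However, the step you yourself single out as the crux is not actually justified as written. From $\hat F \in \overline{NE}^1(X)$ and $-\hat F \in \overline{NE}^1(X)$, intersecting with $A^{\dim X - 1}$ gives only $\hat F \cdot A^{\dim X -1} = 0$; passing from this to $\hat F = 0$ is precisely the assertion that $\overline{NE}^1(X)$ is salient (equivalently, that pairing with $A^{\dim X -1}$ is strictly positive on nonzero pseudoeffective classes), and that is the nontrivial point: a nonzero pseudoeffective class could a priori have degree zero against $A^{\dim X - 1}$. The fact is true and standard, but it needs a citation or an argument, for instance: from $\pm \hat F$ pseudoeffective one gets $\hat F \cdot N_1 \cdots N_{\dim X -1} = 0$ for all nef $N_i$, hence by polarization $\hat F \cdot B \cdot A^{\dim X - 2} = 0$ for every divisor class $B$, and then the generalized Hodge index theorem (the form $(D,B) \mapsto D \cdot B \cdot A^{\dim X -2}$ on $N^1(X)_{\mathbb{R}}$ has signature $(1, \dim N^1(X)_{\mathbb{R}} - 1)$; pull back to a resolution if $X$ is singular) forces $\hat F \equiv 0$. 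Separately, your peeling step requires each $E_i$ and each peeled divisor $D_j$ to be $\mathbb{Q}$-Cartier, i.e., you have quietly added a $\mathbb{Q}$-factoriality hypothesis that the statement does not contain; this is harmless for the paper's use of the lemma (it is applied on a resolution $W$, which may be taken smooth), but you should either add the hypothesis explicitly or explain how the decomposition is carried out without it.
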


We now turn our attention to a specific type of rational contraction -- the composition of a birational contraction and a morphism.  Throughout the remainder of this section, we let $X, Y$ and $Z$ be normal projective varieties with dim$Y <$ dim $X$, $f: X \dashrightarrow Z$ a birational contraction, $c: Z \to Y$ a proper morphism, and $h: X \dashrightarrow Y$ the composition.  Let $W$ be a resolution of $f$, giving us the following diagram.
$$\xymatrix{
& & W \ar[lld]_{p} \ar[d]_{q} \ar[rdd]^e & \\
X \ar@{.>}[rr]^f \ar@{.>}[rrrd]^h & & Z \ar[rd]_c & \\
& & & Y }$$
Note that, if $X$ is a Mori Dream Space, then all rational contractions are of this type (see Proposition 1.11 in \cite{HK}).  In the later sections, we will see that both of the maps we construct are of this type as well.  In what follows, we describe how to use these fibrations to identify extremal rays.

\begin{proposition}
\label{Negativity of Fibration}
Let $D_1$ and $D_2$ be distinct irreducible codimension one subvarieties of $X$ such that $\overline{h( D_1 )} = \overline{h( D_2 )} \neq Y$.  Then $D_1$ generates an extremal ray of $\overline{NE}^1 (X)$.  Furthermore, the divisor $D_1$ does not move.
\end{proposition}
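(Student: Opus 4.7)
The plan is to reduce both conclusions to a single numerical claim: the class $[D_1]\in\overline{NE}^1(X)$ is not a limit of classes of effective $\mathbb{Q}$-divisors whose supports avoid $D_1$. Once this claim is in hand, ``does not move'' is immediate (any effective $D'\sim D_1$ distinct from $D_1$ must, by irreducibility of $D_1$, have $D_1\not\subseteq\mathrm{supp}(D')$). Extremality then follows formally: if $D_1\equiv P_1+P_2$ with $P_i\in\overline{NE}^1(X)$, apply Lemma~\ref{Limits of Divisors} with the single generator $D_1$ to write $P_i=Q_i+c_iD_1$ with $c_i\geq 0$ and $Q_i$ a limit of effective divisors avoiding $D_1$; then $(1-c_1-c_2)D_1=Q_1+Q_2$ forces $c_1+c_2=1$ (the case $<1$ contradicts the claim, and the case $>1$ contradicts salience of $\overline{NE}^1(X)$), so $Q_i=0$ and $P_i=c_iD_1$.

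To prove the numerical claim I would exhibit an irreducible curve $\gamma\subseteq D_1$ with $D_1\cdot\gamma<0$, lying in a covering family of numerically equivalent curves. First choose an ample divisor $L$ on $Y$ whose support contains $V$: this exists because $V\subsetneq Y$ has positive codimension, so $H^0(Y,\mathcal{I}_V\otimes\mathcal{O}(nA))\neq 0$ for $A$ ample and $n\gg 0$. Lifting to the resolution $W$, the strict transforms $\widetilde{D}_1,\widetilde{D}_2$ lie in $e^{-1}(V)\subseteq\mathrm{supp}(e^*L)$ and are not $p$-exceptional, so pushing forward yields
\[ h^*L = a_1D_1 + a_2D_2 + G,\qquad a_1,a_2>0,\ D_1,D_2\not\subseteq\mathrm{supp}(G). \]
Next, take $\gamma$ to be a generic curve in $D_1\cap h^{-1}(v)$ through a point of $D_1\cap D_2\cap h^{-1}(v)$, for a generic $v\in V$: the dimension counts $\dim D_1-\dim V\geq 1$ and $\dim(D_1\cap D_2)-\dim V\geq 0$ (from $\dim V<\dim Y<\dim X$) make this possible, and because the fiber lies in the domain of $h$, automatically $\gamma\cap\mathrm{indet}(h)=\emptyset$. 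The strict transform $\widetilde{\gamma}$ is contracted by $e$ and disjoint from every $p$-exceptional divisor; since $L$ is ample, $e^*L$ is $p$-nef, so Lemma~\ref{Negativity of Contraction} gives $p^*h^*L-e^*L$ effective and $p$-exceptional, and the projection formula yields
\[ h^*L\cdot\gamma = p^*h^*L\cdot\widetilde{\gamma} = e^*L\cdot\widetilde{\gamma} = 0. \]
Substituting the decomposition, and using $D_2\cdot\gamma>0$ (from the transverse meeting) and $G\cdot\gamma\geq 0$, I obtain $D_1\cdot\gamma<0$.

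Finally, if $[R_j]\to[D_1]$ with $R_j$ effective and $D_1\not\subseteq\mathrm{supp}(R_j)$, I would deform $\gamma$ by letting $v$ vary over the irreducible $V$, producing a connected algebraic family $\{\gamma_v\}$ of numerically equivalent curves covering $D_1$. For each $j$, a generic member of the family is not contained in $R_j$, so $R_j\cdot\gamma=R_j\cdot\gamma_v\geq 0$, and passing to the limit gives $[D_1]\cdot\gamma\geq 0$, contradicting $D_1\cdot\gamma<0$. The principal technical obstacle is the construction of $\gamma$ in the middle paragraph: simultaneously securing all the genericity conditions (being $h$-contracted, meeting $D_2$ properly, not being swallowed by $D_2$ or $G$, and fitting into a covering numerically constant family) requires some care with the dimension estimates, especially in cases where the map $h|_{D_1\cap D_2}\to V$ fails to be dominant.
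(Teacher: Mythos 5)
Your overall strategy is the paper's own, reorganized: both arguments pull back an effective divisor from $Y$ whose support contains $\overline{h(D_1)}$, note that $D_1$ and $D_2$ then occur with positive coefficients, use Lemma \ref{Limits of Divisors} to reduce extremality (and non-moving) to a statement about limits of effective divisors avoiding $D_1$, and reach a contradiction by intersecting with a curve lying in a general fiber of $D_1$ over $\overline{h(D_1)}$ that moves in a family covering $D_1$, has intersection zero with the pulled-back divisor, and meets $D_2$ positively, with Lemma \ref{Negativity of Contraction} controlling the exceptional discrepancy. The essential difference is that the paper performs the intersection computation upstairs on the resolution $W$, whereas you push everything down to $X$ -- and that is exactly where a genuine gap enters.

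On $X$ you need $h^*L\cdot\gamma=0$ \emph{exactly}. Writing $p^*h^*L=e^*L+\sum e_iE_i$ with $e_i\geq 0$ (your correct use of Lemma \ref{Negativity of Contraction}), the projection formula gives $h^*L\cdot\gamma=e^*L\cdot\tilde{\gamma}+\sum e_i(E_i\cdot\tilde{\gamma})$, which is only $\geq 0$ unless $\tilde{\gamma}$ avoids every $E_i$ with $e_i>0$. Your assertion that this avoidance is ``automatic'' because the fiber lies in the domain of $h$ does not hold: $D_1\cap h^{-1}(v)$ is only quasi-projective, so a complete curve inside it through your chosen point need not exist, and $p(\bigcup E_i)$, though of codimension $\geq 2$ in $X$, can perfectly well be a divisor inside $D_1$ dominating $V$, in which case every ample complete-intersection curve in the fiber of $D_1$ meets it, the extra term is strictly positive, and the desired inequality $D_1\cdot\gamma<0$ is lost. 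The paper's formulation on $W$ is built precisely to sidestep this: after Lemma \ref{Negativity of Contraction} the exceptional divisors appear on the effective side of the identity for $e^*D$ with nonnegative coefficients, so one only needs $C\cdot E_i\geq 0$, which the covering-family argument supplies since $\tilde{D_1}$ is not itself exceptional. (Your second flagged issue -- choosing $\gamma$ through a point of $D_1\cap D_2\cap h^{-1}(v)$, which needs $h|_{D_1\cap D_2}$ to dominate $V$ -- is the same point the paper dispatches tersely by asserting $C\cdot\tilde{D_2}>0$ from the surjectivity of $\tilde{D_2}$ onto $e(\tilde{D_1})$, so there you are no worse off than the paper; but the exceptional-locus problem is introduced by your descent to $X$ and must be repaired, either by genuinely proving $\tilde{\gamma}$ can be taken off the exceptional locus or by running the intersection argument on $W$ as the paper does.)
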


\begin{proof}

The result is well-known in the case that $D_1$ is contracted by $f$ (see, for example, \cite{Rulla}) .  In what follows we assume that this is not the case.

Let $\tilde{D_1}$, $\tilde{D_2}$ denote the strict transforms of $D_1$, $D_2$ in $W$.  Let $D \subset Y$ be an irreducible codimension one subvariety containing $e( \tilde{D_1} )$.  By assumption, there exist irreducible divisors $B_i$ in $W$ such that
$$ e^* D = a_1 \tilde{D_1} + a_2 \tilde{D_2} + \sum b_i B_i $$
where $a_i , b_i >0$ for all $i$.  Note that some of the divisors $B_i$ may be exceptional divisors of the map $p$.  We will denote these by $E_i$ and rewrite the above expression as
$$ e^* D = a_1 \tilde{D_1} + a_2 \tilde{D_2} + \sum b_i B_i + \sum e_i E_i .$$

Now, suppose that $P_1 ,P_2 \in \overline{NE}^1 (X)$ such that $P_1 + P_2 = D_1$.  Then
$$ p^* P_1 + p^* P_2 = p^* D_1 = \alpha \tilde{D_1} + \sum_{i=1}^n \beta_i E_i .$$
By Lemma \ref{Limits of Divisors}, we may write $$ p^* P_k = Q_k + c_k \tilde{D_1} + \sum_{i=1}^n d_{ik} E_i $$
where $c_k , d_{ik} \geq 0$ and $Q_k$ is a limit of effective divisors whose support does not contain $\tilde{D_1}$ or $E_i$.  Note that $Q_1 + Q_2 + (c_1 + c_2 - \alpha ) \tilde{D_1}$ is in the span of the $E_i$'s.  Hence, if $c_1 + c_2 - \alpha \geq 0$, then by extremality, $Q_1$ and $Q_2$ are in the span of the $E_i$'s as well.  It follows that $P_k = p_* p^* P_k$ is linearly equivalent to a multiple of $D_1$, and we are done.

If, on the other hand, $c_1 + c_2 - \alpha < 0$, then by solving for $\tilde{D_1}$ in the expression above and replacing $Q_1$, $Q_2$ with positive multiples, we have
$$ \tilde{D_1} = Q_1 + Q_2 + \sum_{i=1}^n \gamma_i E_i .$$
Substituting this into the expression above, we obtain
$$ e^* D = a_1 (Q_1 + Q_2 ) + a_2 \tilde{D_2} + \sum b_i B_i + \sum d_i E_i $$
where $a_i , b_i > 0$.  Note that, since $e^* D = q^* c^* D$ and all of the $p$-exceptional divisors are $q$-exceptional, by Lemma \ref{Negativity of Contraction} we have $d_i \geq 0$ for all $i$ as well.

Now, let $F$ be a general fiber of the map $\tilde{D_1} \to e( \tilde{D_1} )$.  Furthermore, let $A$ be a general ample divisor in $F$, $n = dimF -1$, and $C = A^n$.  Notice that, since $C$ covers $\tilde{D_1}$, its intersection with any divisor whose support does not contain $\tilde{D_1}$ is nonnegative.  In particular, it has nonnegative intersection with $Q_i$, $B_i$, and $E_i$.  Moreover, since $C$ is contained in a fiber, $C \cdot e^* D = 0$.  Indeed, it is clear that $C$ does not intersect the pullback of a general ample divisor.  Hence, by linearity of the intersection pairing, it follows that $C \cdot e^* D = 0$ for every divisor $D$ in $Y$.  Finally, since $\tilde{D_2}$ surjects onto $e( \tilde{D_1} )$, we know that $C \cdot \tilde{D_2} > 0$, a contradiction.  It follows that $D_1$ generates an extremal ray of $\overline{NE}^1 (X)$.

To see that $D_1$ does not move, consider the case where $P_2 = 0$ and some multiple $mP_1$ is an irreducible divisor different from $D_1$.  We write $\tilde{P_1}$ for the strict transform of this divisor.  We then have
$$ mp^* D_1 =  m \alpha \tilde{D_1} + \sum_{i=1}^n m \beta_i E_i = \alpha ' \tilde{P_1} + \sum_{i=1}^n \beta_i ' E_i .$$
By replacing $\tilde{P_1}$ with a positive multiple, we may substitute to obtain:
$$ e^* D = a_1 \tilde{P_1} + a_2 \tilde{D_2} + \sum b_i B_i + \sum d_i E_i $$
where, for the same reason as above, all of the coefficients are nonnegative.  We then obtain a contradiction in the same manner as before.

\end{proof}

\section{Genus 6 Curves}

In order to establish our results about the map $\phi_6$, we must show that it arises as the composition of a birational contraction with a morphism.  Recall that the general genus 6 curve admits an embedding into a smooth quintic del Pezzo surface $Y$ as a section of $\vert -2K_Y \vert$, and that this embedding is unique up to an automorphism of the surface.  If we let
$$Z = \{ (C,p) \in \PP (H^0 (Y, -2K_Y )) \times Y \vert p \in C \},$$
there is then a rational map
$$\xymatrix{
\overline{M}_{6,1} \ar@{.>}[r]^f \ar@{.>}[rd]^{\phi_6} & Z/S_5 \ar[d]^{c} \\
 & Y/S_5 \ar@{=}[r] & \widetilde{M}_{0,5}. }$$

\begin{proposition}
The map $f$ is a birational contraction.
\end{proposition}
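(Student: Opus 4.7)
The plan is to verify the two defining conditions of a birational contraction: that $f$ is birational, and that the rational inverse $g = f^{-1} : Z/S_5 \dashrightarrow \overline{M}_{6,1}$ is an isomorphism in codimension one on $Z/S_5$. The second condition is equivalent to the resolution formulation, since any $p$-exceptional divisor of $W$ that were not $q$-exceptional would descend under $q$ to a divisor of $Z/S_5$ contracted by $g$ to a locus of codimension at least two in $\overline{M}_{6,1}$.

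For birationality, the hypothesis that the general smooth genus $6$ curve embeds into $Y$ uniquely up to $\mathrm{Aut}(Y) = S_5$ makes the assignment $(C,p) \mapsto [(C \subset Y, p)]$ generically injective. Since $\dim \overline{M}_{6,1} = 16$ and $h^0(Y, -2K_Y) = 16$ by Riemann-Roch on the quintic del Pezzo, $Z \subset \mathbb{P}^{15} \times Y$ is a hypersurface of dimension $16$, and the finite quotient by $S_5$ preserves this dimension. Hence $f$ is birational.

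For the codimension-one isomorphism, describe $g$ explicitly as the ``forget the embedding and stabilize'' map $[(C \subset Y, p)] \mapsto (C, p)_{\mathrm{stab}}$. Two facts must be verified: (a) $g$ is defined on the complement of a codimension $\geq 2$ subset of $Z/S_5$, and (b) the fibers of $g$ are zero-dimensional. For (a), a Bertini-type analysis of $|-2K_Y|$ yields that the generic singular section is $1$-nodal, and worse singularities (cusps, multiple nodes, etc.) occur in codimension at least two in the linear system; consequently, at the generic point of every divisor of $Z/S_5$ the curve is at-worst $1$-nodal and its stable pointed model is well-defined. For (b), observe that the fiber of $g$ over $(C, p) \in \overline{M}_{6,1}$ is the set of $S_5$-orbits of embeddings $(C,p) \hookrightarrow Y$ as sections of $|-2K_Y|$; since maps of a fixed smooth curve of genus $\geq 2$ to a fixed variety of bounded degree form a discrete set, this fiber is finite. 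These two facts together imply that $g$ contracts no divisor, whence $f$ is a birational contraction.

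The main technical obstacle is the Bertini-type claim for $|-2K_Y|$: beyond the standard fact that a general singular section is $1$-nodal, one must rule out other codimension-one pathologies in $Z$, in particular that the locus where the marked point $p$ coincides with a node of $C$ has codimension at least two in $Z/S_5$. This reduces to an incidence variety dimension count using the very ampleness of $-2K_Y$ on the smooth quintic del Pezzo.
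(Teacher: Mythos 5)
Your overall architecture is the same as the paper's: invert $f$ on a large open subset of $Z/S_5$, show the locus where this fails has codimension at least two because it sits properly inside the irreducible discriminant hypersurface, and reduce ``birational contraction'' to ``the inverse contracts no divisor.'' (One small slip: being a birational contraction is \emph{not} equivalent to $f^{-1}$ being an isomorphism in codimension one --- it only requires that $f^{-1}$ contract no divisor, which is in fact the condition you go on to verify; the paper gets your step (a) even more cheaply, by observing that the non--moduli-stable locus is a proper closed subset of the irreducible hypersurface of singular curves in $Z/S_5$.) The genuine gap is in your step (b). The principle that maps of a fixed smooth curve of genus $\geq 2$ to a fixed variety of bounded degree form a discrete set is false in general: degree-two maps from a genus-two curve to $\PP^1$, or maps to any target with positive-dimensional automorphism group, move in families. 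Here $\mathrm{Aut}(Y)=S_5$ is finite, but discreteness is still not formal: the tangent space to $\mathrm{Hom}(C,Y)$ at an embedding $\phi$ is $H^0(C,\phi^*T_Y)$, and for $C\in|-2K_Y|$ this bundle has degree $-K_Y\cdot C=10$ and Euler characteristic $0$, so nothing guarantees its sections vanish for special curves; a priori a fixed abstract curve could move in a positive-dimensional family inside $|-2K_Y|$. Moreover, the divisor you most need to control is the discriminant, whose generic member is a $1$-nodal curve, so the ``fixed smooth curve'' hypothesis does not even apply there; and what is actually needed is not finiteness of \emph{all} fibers of $g$ (which may fail over special points) but generic finiteness of $g$ along each divisor of $Z/S_5$.

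The missing ingredient is precisely the geometric fact you already invoke for birationality: the embedding of a genus $6$ curve into $Y$ as a section of $|-2K_Y|$ is unique up to $\mathrm{Aut}(Y)=S_5$. The paper uses the universal property of $\overline{M}_{6,1}$ to get a morphism $U\to\overline{M}_{6,1}$ on the moduli-stable locus $U\subset Z/S_5$, and then uses this uniqueness (plus the generic absence of automorphisms) to conclude that this inverse morphism is injective onto its image, so that no divisor of $U$ --- and hence, since the complement of $U$ has codimension $\geq 2$, no divisor of $Z/S_5$ --- can be contracted. If you replace your discreteness principle by this uniqueness statement, applied along each divisor and in particular to general $1$-nodal members of $|-2K_Y|$, your argument closes; as written, step (b) does not.
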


\begin{proof}

For the first part, it suffices to exhibit a morphism $f^{-1}: U \to \overline{M}_{6,1}$, where $U \subset Z/S_5$ is open with complement of codimension $\geq 2$ and $f^{-1}$ is an isomorphism onto its image.  To see this, let $U \subset Z/S_5$ be the set of all moduli stable pointed curves $(C,p) \in Z/S_5$.  Notice that the complement of $U$ is strictly contained in the locus of singular curves, which is an irreducible hypersurface in $Z/S_5$.  It follows that the complement of $U$ has codimension $\geq 2$.

By the universal property of the moduli space, since
$$U \to \PP H^0 (Y, -2K_Y )/S_5$$
is a family of moduli stable curves, it admits a unique map $U \to \overline{M}_{6,1}$.  Since the embedding of a genus 6 curve in $Y$ is unique up to an automorphism of $Y$, two such curves are isomorphic if and only if they differ by an element of $S_5$.  Since the general genus 6 curve possesses no non-trivial automorphisms, it follows that this map is generically injective.

\end{proof}

We now consider the images of divisors in $\overline{M}_{6,1}$ under the map constructed above.  One geometric divisor of interest is the Gieseker-Petri divisor $GP_6$.  As mentioned in the introduction, the Gieseker-Petri divisors played a key role in Harris and Eisenbud's proof the $\overline{M}_g$ is of general type for $g$ sufficiently large \cite{HE}.  By definition, $GP_6$ is the closure of the locus of smooth curves $C$ admitting a $g^1_4$ $L$ such that the multiplication map
$$ H^0 (C,L) \otimes H^0 (C,K-L) \to H^0 (C,K) $$
fails to be injective.  While the typical genus 6 curve admits 5 $g^1_4$'s, the general element of $GP_6$ admits fewer than this expected number.

Another divisor of interest is the one denoted $D_6$ above.  Recall that $D_6$ is the closure of the locus of pointed curves $(C,p) \in M_{6,1}$ possessing a $g^2_6$ $L$ and a point $p' \in C$ such that
$$h^0 (C, L - p - p' ) \geq 2.$$
We will show that $D_6$ and $GP_6$ have the same image under the map $\phi_6$.

\begin{proposition}
\label{Genus 6 Result}
Let $\Delta \subset \widetilde{M}_{0,5}$ be the boundary divisor.  Then
$$\overline{\phi_6 ( GP_6 )} = \overline{\phi_6 ( D_6 )} = \Delta.$$
\end{proposition}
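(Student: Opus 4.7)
The plan is to prove each equality by first establishing containment in $\Delta$ and then matching dimensions.

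\emph{Image of $D_6$.} Each of the five $g^2_6$'s on a general $C \in |-2K_Y|$ is the restriction to $C$ of one of the five linear systems on $Y$ that define a birational morphism $\pi_i : Y \to \mathbb{P}^2$ contracting four disjoint $(-1)$-curves. If $(C,p) \in D_6$ with $g^2_6$ $L$ and witness $p'$, the defining condition $h^0(L - p - p') \geq 2$ means $p$ and $p'$ have the same image under the induced map $C \to \mathbb{P}^2$; since $\pi_i$ is birational, both $p$ and $p'$ must lie on $(-1)$-curves that $\pi_i$ contracts. In particular $p$ lies on a $(-1)$-curve of $Y$, so $\phi_6(C,p) = [p] \in \Delta$. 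For equality, note that each of the ten $(-1)$-curves is contracted by exactly two of the five $\pi_i$'s (since $5\cdot 4 = 2 \cdot 10$). Consequently, for a general $[p_0] \in \Delta$ with representative $p_0$ lying on a $(-1)$-curve $E$, every pair $(C,p_0)$ in the $14$-dimensional fiber $\phi_6^{-1}([p_0])$ is automatically in $D_6$ (use the $g^2_6$ cut out by a $\pi_i$ contracting $E$). Hence the generic fiber of $\phi_6|_{D_6}$ has the same dimension as that of $\phi_6$, giving $\dim \phi_6(D_6) = 15 - 14 = 1$, and $\overline{\phi_6(D_6)} = \Delta$ by irreducibility of $\Delta$.

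\emph{Image of $GP_6$.} The bridge to $D_6$ is the base-point-free pencil trick. If $C$ is a Gieseker-Petri curve and $A$ is a $g^1_4$ for which the multiplication map is not injective, then $h^0(K_C - 2A) \geq 1$. Writing $K_C - 2A \sim p + p'$ and setting $L := K_C - A$, a $g^2_6$, we compute
$$ L - p - p' = K_C - A - p - p' \sim A, $$
so $h^0(L - p - p') = h^0(A) = 2$ and thus $(C,p), (C,p') \in D_6$. This identifies a sublocus of $GP_6$ lying inside $D_6$, whose image under $\phi_6$ is in $\Delta$ by the previous part. The dimension count proceeds as before.

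\emph{Main obstacle.} The principal difficulty is showing that \emph{all} of $\overline{\phi_6(GP_6)}$, not only the sublocus supplied by the pencil trick, is contained in $\Delta$. A generic $(C,p) \in GP_6$ has $p$ an arbitrary point on the GP curve $C$, whereas the pencil-trick argument pinpoints only finitely many special $p$ per curve. Moreover, generic GP curves admit no embedding in the smooth del Pezzo $Y$, so $\phi_6$ is not even defined at a generic point of $GP_6$. To control the image, one must work with the resolution $\phi_6 = c \circ f$ from the previous section and verify that the strict transform of $GP_6$ under $f$ is contained in $c^{-1}(\Delta) \subset Z/S_5$. Geometrically, this amounts to checking that, in any one-parameter degeneration of $(C_t, p_t)$ with $C_0$ Gieseker-Petri, the limit embedding places the marked point on a $(-1)$-curve of the (possibly singular) limit del Pezzo; equivalently, the map $f$ sends $GP_6$ generically into the locus where $p$ lies on a $(-1)$-curve.
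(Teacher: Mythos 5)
Your treatment of $D_6$ is correct and is essentially the paper's argument: a point of a smooth $|-2K_Y|$ section becomes a node under one of the five $g^2_6$'s exactly when it lies on a $(-1)$-curve of $Y$, and your fiber-dimension count over a general point of $\Delta$ gives surjectivity. The pencil-trick remark for $GP_6$ (that $h^0(K_C-2A)\geq 1$, so writing $K_C-2A\sim p+p'$ one gets $(C,p),(C,p')\in D_6$) is a nice observation not in the paper, but it does not prove the statement, and you have correctly located without closing the real difficulty: the marked point on a Gieseker--Petri curve is arbitrary, $\phi_6$ is undefined along $GP_6$ (as you note, a general GP curve is not a smooth $|-2K_Y|$ section, precisely because two of its $g^1_4$'s coincide), and saying that ``one must check that in any one-parameter degeneration the limit places the marked point on a $(-1)$-curve'' restates the needed containment rather than establishing it. Note also that because $GP_6$ lies in the indeterminacy locus, even your pencil-trick sublocus is not controlled by its containment in $D_6$: the image of a rational map along a subvariety of the indeterminacy locus must be computed on a resolution, and need not be contained in the image of a divisor containing that subvariety. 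For the same reason ``the dimension count proceeds as before'' has nothing to count until the containment $\overline{\phi_6(GP_6)}\subseteq\Delta$ is in hand; as written, the $GP_6$ half is a genuine gap.

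The paper closes exactly this gap with an explicit degeneration. Take a family $\mathcal{C}\to B$ over a DVR whose general fiber is a general genus $6$ curve and whose special fiber is a general element of $GP_6$, together with line bundles $L_1,L_2$ restricting to distinct $g^1_4$'s on the general fiber and to one and the same $g^1_4$ on the special fiber. The map $|L_1|\times|L_2|:\mathcal{C}\to\PP^1\times\PP^1$ sends the general fiber birationally onto a $3$-nodal curve of bidegree $(4,4)$ and sends the special fiber, as a cycle, to $4$ times the diagonal; blowing up the three nodes produces a map to $Y$ under which the special fiber maps to $4$ times a $(-1)$-curve plus $2$ times each of the three $(-1)$-curves meeting it. Hence, whatever the limiting position of the marked point, its image in $Y/S_5\cong\widetilde{M}_{0,5}$ (which exists since $\widetilde{M}_{0,5}$ is projective) lies on the image of a $(-1)$-curve, i.e.\ in $\Delta$; and since the marked point sweeps out the whole special fiber, these limits sweep out all of $\Delta$, giving $\overline{\phi_6(GP_6)}=\Delta$. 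This degeneration computation is the step your proposal defers, and it is the heart of the paper's proof of the $GP_6$ equality.
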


\begin{proof}

The general genus 6 curve has 5 $g^2_6$'s, corresponding to the 5 blow-downs $Y \to \PP^2$.  From this description, it is clear that a point on a genus 6 curve will map to a node under some $g^2_6$ if and only if it is contained in a $-1$ curve on $Y$.  It follows that $\overline{\phi_6 ( D_6 )} = \Delta$.

We now prove the statement about the Gieseker-Petri divisor.  Let $\mathcal{C} \to B$ be a family of curves over a DVR such that the general fiber is a general genus 6 curve and the special fiber is a general element of $GP_6$.  By definition, there exist line bundles $L_1 , L_2$ on $\mathcal{C}$ such that the restriction of $L_1$ and $L_2$ to the general fiber are distinct $g^1_4$'s, whereas their restrictions to the special fiber are identical $g^1_4$'s.  The linear series $\vert L_1 \vert$ and $\vert L_2 \vert$ determine a map from $\mathcal{C}$ to $\PP^1 \times \PP^1$, sending the general fiber birationally onto a curve with 3 nodes, and the special fiber to 4 times the diagonal.  Blowing up at the three nodes, we obtain a map from $\mathcal{C}$ to $Y$.  Note that the image of the special fiber under this map is a union of $-1$ curves.  In particular, it maps to the sum of 4 times a $-1$ curve, plus 2 times each of the 3 $-1$ curves meeting it.

$$\xymatrix{
\mathcal{C} \ar[rr] \ar[rrd]^{\vert L_1 \vert \times \vert L_2 \vert} \ar[d] & & Y \ar[d] \\
B & & \PP^1 \times \PP^1 }$$

Because this family is sufficiently general and $\widetilde{M}_{0,5}$ is projective, it follows that $\overline{ \phi_6 ( GP_6 )} = \Delta$.

\end{proof}

\begin{theorem}
The divisor $D_6$ generates an extremal ray of $\overline{NE}^1 ( \overline{M}_{6,1} )$.
\end{theorem}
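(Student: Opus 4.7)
The plan is to apply Proposition \ref{Negativity of Fibration} directly, using the rational map $\phi_6 : \overline{M}_{6,1} \dashrightarrow \widetilde{M}_{0,5}$ that has already been constructed. All the necessary ingredients are in place: the preceding proposition shows $\phi_6$ factors as a birational contraction $f: \overline{M}_{6,1} \dashrightarrow Z/S_5$ followed by the morphism $c : Z/S_5 \to \widetilde{M}_{0,5}$, so we are exactly in the situation covered by the proposition.

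Concretely, I would set $D_1 = D_6$ and $D_2 = GP_6$ in the statement of Proposition \ref{Negativity of Fibration}. By Proposition \ref{Genus 6 Result} we have $\overline{\phi_6(D_6)} = \overline{\phi_6(GP_6)} = \Delta$, and $\Delta \subsetneq \widetilde{M}_{0,5}$, so the hypothesis $\overline{h(D_1)} = \overline{h(D_2)} \neq Y$ is satisfied. The only remaining thing to check is that $D_6$ and $GP_6$ are two \emph{distinct} irreducible codimension-one subvarieties of $\overline{M}_{6,1}$. Irreducibility of each is built into its definition as the closure of an irreducible locus, and distinctness is immediate because the generic member of $GP_6$ lies in $M_6 \times_{M_6} M_{6,1}$ and is determined by a condition on the curve alone (an exceptional $g^1_4$), whereas a general $(C,p) \in D_6$ has $C$ a general genus 6 curve and the condition genuinely involves the marked point $p$. (Alternatively, one can invoke the computed numerical classes, which differ.)

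Applying Proposition \ref{Negativity of Fibration} then yields at once that $D_6$ generates an extremal ray of $\overline{NE}^1(\overline{M}_{6,1})$ and, moreover, does not move. There are no further calculations to perform.

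The only potentially delicate step is the distinctness check between $D_6$ and $GP_6$, but this is really a formality rather than an obstacle: the two divisors are defined by geometrically different conditions and their classes have been computed independently in the literature. Thus the proof reduces to a short paragraph assembling the pieces proved in the previous two propositions.
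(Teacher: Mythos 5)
Your proposal is correct and is essentially the paper's own argument: the paper proves the theorem by citing Propositions \ref{Negativity of Fibration} and \ref{Genus 6 Result}, exactly the application with $D_1 = D_6$, $D_2 = GP_6$ that you spell out. The extra verification of distinctness and irreducibility that you include is a reasonable (and harmless) elaboration of what the paper leaves implicit.
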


\begin{proof}
This follows immediately from Propositions \ref{Negativity of Fibration} and \ref{Genus 6 Result}.
\end{proof}

\section{Genus 5 Curves}

In order to obtain our results on $\overline{M}_{5,1}$, we make a similar argument to that of section 3.  Specifically, we first exhibit the map $\phi_5$ as the composition of a birational contraction and a morphism, and then apply Proposition \ref{Negativity of Fibration} to show that certain effective divisors generate extremal rays of  $\overline{NE}^1 ( \overline{M}_{5,1} )$.  To see that this map does indeed admit the desired decomposition, we use techniques from geometric invariant theory.  For a more detailed discussion of variation of GIT, see \cite{DH} and \cite{Thaddeus}.

Throughout, we will make frequent use of Mumford's numerical criteria.  Given a reductive group $G$ acting on a variety $X$, and a one-parameter subgroup $\lambda : \mathbb{C}^* \to G$, we choose coordinates so that $\lambda$ is diagonal.  In other words, it is given by $diag(t^{a_1}, t^{a_2},  \ldots , t^{a_n} )$.  We will refer to the $a_i$'s as the weights of the $\mathbb{C}^*$ action.  For a point $x \in X$, Mumford defines
$$\mu_{\lambda} (x) = min(a_i \vert x_i \neq 0).$$
Then $x$ is stable (semistable) if and only if $\mu_{\lambda} (x) < 0$ (resp. $\mu_{\lambda} (x) \leq 0$) for every nontrivial 1-parameter subgroup $\lambda$ of $G$ \cite{Mumford}.

Now, let $Y = \PP^4$ and $Z = G(3, \OO_Y (2))$ be the Grassmmannian of 3-dimensional subspaces of the space of quadrics in $Y$.  Since a general genus 5 canonical curve is the complete intersection of 3 quadrics in $\PP^4$, the general point in $Z$ corresponds to a genus 5 curve.  For a given such canonical curve $C$, we will write $\II_C (2)$ for the vector space of quadrics containing the $C$.  Now, let
$$ X = \{ (\II_C (2) ,p) \in Z \times Y \vert p \in Q \text{   } \forall Q \in \II_C (2) \}. $$
We denote the various maps as in the following diagram.
$$\xymatrix{
X \ar[d]^f \ar[r]^i & Z \times Y \ar[d]^{\pi_2} \ar[r]^{\pi_1} & Y \\
Z \ar[r]^{id} &Z}$$
Since $X$ is a Grassmmannian bundle over $\PP^4$, it is smooth, and $PicX \cong \Z \times \Z$.  We will write $\OO_X (a,b)$ to denote $(i \circ\pi_1 )^*(\OO_{\PP^4}(a)) \otimes f^*(\OO_Z (b))$.  There is a natural action of $Aut(Y) = PSL(5, \mathbb{C} )$ on $X$.  Our goal is to study the GIT quotients of $X$ by the action of this group.

Notice that, since the general point of $X$ corresponds to a pointed genus 5 curve, there is a rational map $X \dashrightarrow \overline{M}_{5,1}$.  Our first step is to calculate the pullback of the pointed Brill-Noether divisors under this map.

\begin{proposition}
The pullback of every pointed Brill-Noether divisor under the map $X \dashrightarrow \overline{M}_{5,1}$ is numerically equivalent to $\OO_X (3,2)^{\otimes n}$ for some $n \geq 0$.

\end{proposition}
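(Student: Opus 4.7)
The plan is to compute the pullback map $\text{Pic}_{\mathbb{Q}}(\overline{M}_{5,1}) \to \text{Pic}_{\mathbb{Q}}(X) \cong \mathbb{Q}^2$ on the standard generators $\lambda$, $\psi$, and the boundary classes, and then substitute into Logan's formula for each pointed Brill-Noether class. Since $\text{Pic}_{\mathbb{Q}}(X)$ has rank two, the proposition reduces to a numerical identity among Logan's coefficients; non-negativity of $n$ then follows automatically because the pullback of an irreducible effective divisor is effective and both generators $\OO_X(1, 0), \OO_X(0, 1)$ are effective.

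First I would compute the pullbacks of $\psi$ and $\lambda$. Since $X$ is (generically) the universal curve over $Z$ with marked-point section induced by the tautological inclusion $X \hookrightarrow Z \times \PP^4$, the pullback of $\psi$ is the relative dualizing sheaf $\omega_{X/Z}$. Applying adjunction to $X \hookrightarrow Z \times \PP^4$, whose normal bundle is $\mathcal{S}^{\vee} \otimes \OO_{\PP^4}(2)|_X$ with $\det \mathcal{S} = \OO_Z(-1)$, gives $\omega_{X/Z} = \OO_X(1, 1)$. For the pullback of $\lambda$, I would push $\omega_{X/Z}$ forward to $Z$ using the Koszul resolution of $\OO_X \subset \OO_{Z \times \PP^4}$; only two terms of the hypercohomology spectral sequence survive, it collapses at $E_1$, and it yields $f_*\omega_{X/Z} \cong \OO_Z(1)^{\oplus 5}$, so $\lambda|_X = \OO_X(0, 5)$.

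Next I would deal with the boundary divisors. A key observation is that each reducible boundary class $\delta_i$ for $i \geq 1$ pulls back to zero in $\text{Pic}(X)$, because canonical embeddings of reducible stable genus $5$ curves as complete intersections of three quadrics in $\PP^4$ form a locus of codimension at least two in $Z$ (the components would fail to be canonically embedded in any residual projective space of the right dimension). Only $\delta_0$ contributes, pulling back to $f^* \mathrm{Disc}(Z) = \OO_X(0, d_0)$, where $d_0$ is the degree of the discriminant hypersurface with respect to the Pl\"ucker class. Substituting these into Logan's formula
\[
[BN^r_{d,Z}] = \mu_{r,d,Z}\bigl(A_Z\,\lambda + B_Z\,\psi - C_{Z,0}\,\delta_0 - \sum_{i \geq 1} C_{Z,i}\,\delta_i\bigr),
\]
the pullback to $X$ becomes $\mu_{r,d,Z}\,\OO_X(B_Z,\, 5A_Z + B_Z - d_0 C_{Z,0})$, and the proposition reduces to checking the identity $15 A_Z + B_Z = 3\, d_0\, C_{Z,0}$ for every admissible $(r, d, Z)$. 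As a sanity check, the Weierstrass case $(A_Z, B_Z, C_{Z,0}) = (-1, 15, 0)$ from Cukierman's formula satisfies $0 = 0$ and produces $-\OO_X(0, 5) + 15\,\OO_X(1, 1) = \OO_X(15, 10) = 5\cdot\OO_X(3, 2)$, exhibiting the predicted $(3, 2)$ ratio.

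The main obstacle will be carrying out the analogous verification uniformly across the remaining admissible pointed Brill-Noether sequences for $g = 5$, which is a finite but delicate calculation requiring Logan's explicit coefficients together with the value of $d_0$. A cleaner alternative would be to realize each pointed Brill-Noether divisor directly on $X$ as the degeneracy locus of a natural bundle morphism from the trivial bundle $H^0(\PP^4, \OO(1)) \otimes \OO_X$ into a relative jet bundle of $\OO_X(1, 0)$ along the fibers of $f$, and compute its class by Porteous, exhibiting the $(3, 2)$ ratio intrinsically without invoking Logan's formula.
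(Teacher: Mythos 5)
Your computations of the individual pullbacks are correct as far as they go: adjunction for $X \subset Z \times \PP^4$ does give $\omega_{X/Z} = \OO_X(1,1)$ (so $\psi \mapsto \OO_X(1,1)$), the Koszul pushforward does give $\lambda \mapsto \OO_X(0,5)$, the classes $\delta_i$, $i \geq 1$, do pull back to zero (a canonically embedded complete intersection has no disconnecting nodes), and these values agree with the intersection numbers the paper extracts from its two test curves $F_1$, $F_2$. But the argument is not a proof, for two reasons. First, everything hinges on the coefficient $d_0$ of $\delta_0$'s pullback, and you never compute it; the Weierstrass sanity check cannot detect it because $C_{Z,0}=0$ there, so the reduction to the identity $15A_Z + B_Z = 3 d_0 C_{Z,0}$ is unverifiable as written. (The correct value is $d_0 = 40$ -- visible, for instance, in the paper's Lefschetz-pencil computation, where $\delta_0 \cdot F_2 = 40$ while $\OO_X(0,1)\cdot F_2 = 1$ -- and with it the non-pointed Brill--Noether class $8\lambda - \delta_0 - \cdots$ pulls back to zero.) Second, your plan to ``substitute into Logan's formula for each pointed Brill--Noether class'' presumes explicit coefficients $A_Z, B_Z, C_{Z,0}$ for every admissible $(r,d,Z)$ in genus $5$; such closed-form classes are not what Logan provides, and you explicitly defer this case-by-case verification, which is the heart of the statement.

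The paper closes exactly this gap differently: it invokes Logan's structural result that every pointed Brill--Noether divisor class is an \emph{effective} combination of the Weierstrass divisor and the (pulled-back) Brill--Noether divisor, shows the latter pulls back to zero, and computes the Weierstrass pullback to be $\OO_X(15,10)$ via the test curves. If you replace your case-by-case identity with that effective-combination statement, your bundle-theoretic computations of $\psi$, $\lambda$, and (once you compute it) $\delta_0$ would yield a complete and arguably more intrinsic proof; your closing remark about non-negativity of $n$ is fine, since the pullback of an effective class under the dominant map is effective and the pseudo-effective cone of $X$ contains no line. As it stands, though, the missing value of $d_0$ and the deferred uniform verification are genuine gaps rather than routine details.
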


\begin{proof}

To prove this, we introduce two test curves on $X$.  Let $F_1$ be a fiber of a general point in $Z$ under the map $f: X \to Z$.  In other words, $F_1$ is obtained by fixing a general genus 5 curve $C$ and varying the marked point.  Notice that $\OO_X (0,1)$ has intersection number 0 with $F_1$, and $\OO_X (1,0)$ has intersection number $deg(C) = 8$ with $F_1$.

Let $S$ be the complete intersection of two general quadric hypersurfaces in $Y$.  Then $S$ is a smooth del Pezzo surface of degree 4.  Fix a point $p \in S$ and let $F_2$ be a Lefschetz pencil of curves in $\vert -2K_S \vert$ through $p$ with marked point $p$.  Since $F_2$ lies entirely inside a fiber of the map $i \circ \pi_1 : X \to Y$, $\OO_X (1,0)$ has intersection number 0 with $F_2$.  Moreover, since the image of $F_2$ under the map $f: X \to Z$ is a line in $Z$, $\OO_X (0,1)$ has intersection number 1 with $F_2$.  It is clear that the class of a divisor on $X$ is determined uniquely by its intersection with $F_1$ and $F_2$.

Notice that the pullback of the (non-pointed) Brill-Noether divisor has intersection number 0 with both $F_1$ and $F_2$, and therefore pulls back to zero under this rational map.  Because every pointed Brill-Noether divisor is an effective combination of this divisor and the Weierstrass divisor, we therefore see that the pullbacks of all the pointed Brill-Noether divisors lie on a single ray in $NS(X)$.  It thus suffices to compute the intersection numbers of the pullback of a single such divisor with $F_1$ and $F_2$.  Here we examine the Weierstrass divisor, $W = BN^1_{5,(0,5)}$.

The general genus 5 curve possesses $4 \cdot 5 \cdot 6 = 120$ Weierstrass points, so $W \cdot F_1 = 120$.  To compute $W \cdot F_2$, we use the class of the Weierstrass divisor in $\overline{M}_{5,1}$ (see \cite{Cuk1}):
$$ W = 15 \omega - \lambda + 10 \delta_1 + 6 \delta_2 + 3 \delta_3 + \delta_4 . $$
Since $F_2$ is a Lefschetz pencil, $\delta_i = 0$ $\forall i > 0$.  The total space of this pencil is the blow-up of $S$ at 16 points, with $p$ being one of these points.  It follows that the Euler characteristic of the total space $F_2^{tot}$ is 24.  If $F^{gen}$ is a generic fiber of this pencil, then
$$ \chi ( F_2^{tot} ) = \chi ( \PP^1 )\chi ( F_2^{gen} ) + \# \{ \text{singular fibers} \} . $$
It follows that $\delta_0 = 24 + 2 \cdot 8 = 40$.  Moreover, $K_{F_2^{tot}}^2 = -12$, and so $\kappa = -12 + 2 \cdot 2 \cdot 8 = 20$.  Since $\lambda = 12( \kappa + \delta )$, we have $\lambda = 5$.  Finally, $\omega$ is the negative self-intersection of the section corresponding to the fixed point $p$, which is just the exceptional divisor lying over $p$ in $F_2^{tot}$.  It follows that $\omega = 1$, and so $W \cdot F_2 = 15 - 5 = 10$.

Together, these intersection numbers show that the pullback of $W$ is numerically equivalent to $\OO_X (15,10)$.  This concludes the proof.

\end{proof}

Our next result shows that this ray generates an edge of the $G$-effective cone.

\begin{proposition}

$\OO_X (3,2)$ lies on a boundary of $C^G (X)$.

\end{proposition}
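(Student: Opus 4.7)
The plan is to apply Mumford's numerical criterion. I would exhibit a one-parameter subgroup $\lambda$ of $G = PSL(5, \mathbb{C})$ and a point $x = (V, p) \in X$ such that $\mu_\lambda(x) = 0$ with respect to the linearization $\OO_X(3,2)$, while $\mu_\lambda(x) > 0$ with respect to $\OO_X(a,b)$ for every $(a,b)$ lying in one of the two open half-planes of $NS(X)_{\mathbb{R}} \cong \mathbb{R}^2$ bounded by the ray through $(3,2)$. Such an $x$ is strictly semistable for $\OO_X(3,2)$, witnessing that $\OO_X(3,2) \in C^G(X)$; and its instability under any perturbation across the ray would be leveraged to show that the semistable locus becomes empty on that side, so $(3,2)$ lies on the boundary of $C^G(X)$.

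To make the criterion explicit, I would fix a maximal torus $T \subset G$ and take $\lambda(t) = \mathrm{diag}(t^{a_0}, t^{a_1}, t^{a_2}, t^{a_3}, t^{a_4})$ with $\sum a_i = 0$. The weights of the induced $T$-action are combinatorial: the monomial $x_i x_j \in H^0(\OO_{\PP^4}(2))$ carries weight $a_i + a_j$, and the standard Plücker coordinates on $Z = G(3, H^0(\OO_{\PP^4}(2)))$ carry weights that are sums of three such $a_i + a_j$. For $x = (V, p)$ a $T$-fixed point, with $p$ a coordinate point and $V$ spanned by three weight-monomial quadrics vanishing at $p$, $\mu_\lambda^{\OO_X(a,b)}(x)$ reduces to $a \cdot \mu_\lambda^{\OO_X(1,0)}(p) + b \cdot \mu_\lambda^{\OO_X(0,1)}(V)$, each summand an explicit linear expression in the $a_i$. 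The preceding proposition already forecasts the balancing ratio: since the pullback of every pointed Brill-Noether divisor is proportional to $\OO_X(3,2)$, I expect to choose $\lambda$ and $V$ so that the two summands cancel precisely at slope $(3,2)$, with the sign of $\mu$ flipping as $(a,b)$ crosses the ray.

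The main obstacle will be globalizing the destabilization. To place $\OO_X(3,2)$ on the boundary of $C^G(X)$, rather than merely on a VGIT wall internal to the cone, I must show that for perturbations $(a,b)$ on the chosen side of the ray, \emph{every} point of $X$ becomes unstable, so that the semistable locus is empty. I would handle the generic locus by $G$-conjugation of $\lambda$ and treat the non-generic strata case by case, invoking that these strata are few and each is controlled by a natural torus-fixed representative. If direct conjugation proves insufficient, a fallback is to verify emptiness of the perturbed semistable locus more directly, by computing $H^0(X, \OO_X(a,b)^{\otimes n})^G$ using Borel-Weil-Bott on the Grassmannian bundle structure $f \colon X \to Z$, and observing that on the chosen side of the ray through $(3,2)$ no invariant sections exist in any power.
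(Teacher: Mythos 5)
Your general framework (Mumford's criterion, the decomposition of $\mu_\lambda$ for the product linearization $\OO_X(a,b)$ into $a$ times a weight at $p$ plus $b$ times a Pl\"ucker weight) is consistent with the paper's setup, but both halves of the argument you would actually need are missing. First, exhibiting a single one-parameter subgroup $\lambda$ with $\mu_\lambda(x)=0$ shows only that $x$ is not stable; to conclude that $x$ is semistable for $\OO_X(3,2)$ --- which is what puts $\OO_X(3,2)$ in $C^G(X)$ --- you would have to control \emph{every} one-parameter subgroup at $x$, or produce an invariant section not vanishing there. The paper gets this half for free from the previous proposition: the pullbacks of the pointed Brill--Noether divisors are $G$-invariant sections of $\OO_X(3,2)^{\otimes n}$, so $X^{ss}(\OO_X(3,2))\neq\emptyset$. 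Second, and more seriously, the step you defer to ``conjugation on the generic locus plus a case-by-case treatment of the non-generic strata'' is exactly the content of the proof, and the sketch gives no way to carry it out: the complement of the generic locus (degenerate nets of quadrics, singular or non-reduced base loci, points where the adapted flag degenerates) is not a short list of torus-fixed representatives. What the paper actually does is produce, for an \emph{arbitrary} point $(C,p)\in X$, an adapted coordinate flag: since the restriction map $\II_C(2)\to H^0(T_pC,\OO_{\PP^1}(2)-2p)$ has one-dimensional target, there are two independent quadrics through $T_pC$, and a further one-dimensional restriction argument yields a quadric whose tangent hyperplane at $p$ is prescribed; in these coordinates all Pl\"ucker coefficients with index sum greater than $15$ vanish, so the single one-parameter subgroup with weights $(-2,-1,0,1,2)$ has $\mu\geq 0$ at every point, i.e.\ $X^s(\OO_X(3,2))=\emptyset$. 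That uniform normal form is the key idea absent from your proposal.

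You have also set yourself a strictly harder target than necessary. The paper reduces the boundary statement to the VGIT criterion ``$X^{ss}(\LL)\neq\emptyset$ and $X^s(\LL)=\emptyset$,'' which involves only the single linearization $\OO_X(3,2)$; you propose instead to prove $X^{ss}(\OO_X(a,b))=\emptyset$ for all $(a,b)$ on one side of the ray. That would suffice, but neither of your routes is viable as described: instability of one carefully chosen point says nothing about the rest of $X$, and computing $H^0(X,\OO_X(a,b)^{\otimes n})^G$ ``by Borel--Weil--Bott'' is not realistic --- the Grassmannian here is a Grassmannian of the $SL_5$-representation $H^0(\PP^4,\OO(2))$, not a flag variety of $SL_5$, so finding invariants reduces to a plethysm problem (locating trivial $SL_5$-summands in Schur functors applied to $\mathrm{Sym}^2$ of the standard representation) that Borel--Weil--Bott does not resolve. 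If you rework the argument, aim at the paper's criterion and supply the pointwise destabilization at $\OO_X(3,2)$ itself.
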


\begin{proof}

Let $\LL = \OO_X (3,2)$.  It suffices to show that $X^{ss}( \LL ) \neq X^s ( \LL ) = \emptyset$.  It is clear that $X^{ss}( \LL ) \neq \emptyset$, since the pullbacks of all of the pointed Brill-Noether divisors are $G$-invariant sections of $\LL^{\otimes n}$ for some $n$.

To show that $X^s ( \LL ) = \emptyset$, we invoke the numerical criterion.  Let $(C,p) \in X$.  By change of coordinates, we may assume that $p = (0,0,0,0,1)$.  Furthermore, we may assume that the tangent line to $C$ at $p$ is the line $x_0 = x_1 = x_2 = 0$.  Note that the map
$$ \II_C (2) \to H^0 ( \PP^1 , \OO_{\PP^1} (2) - 2p ) $$
given by restricting the quadric to the tangent line $T_p C$ is a linear map.  Since the codomain has dimension 1, there are at least two linearly independent quadrics in $C$ containing $T_p C$.  Again, by change of coordinates, we may assume that the tangent spaces to these two quadrics at $p$ both contain the plane $x_0 = x_1 = 0$.  In addition, let $V \subseteq \II_C (2)$ be the subspace of quadrics containing $T_p C$.  Notice that, if $Q \in V$, then the restriction of $Q$ to this plane is the union of $T_p C$ and a second line through $p$.  Consider the map
$$ V \to H^0 ( \PP^2 , \OO_{\PP^1} (1) - p) $$
given by restricting this second line to $T_p C$.  As above, since the codomain has dimension 1, there must be a quadric in $V$ whose restriction to this plane is a double line.  By change of coordinates, we may assume that the tangent space to this quadric at $p$ is the hyperplane $x_0 = 0$.   So, if
$$ C = \sum_{0 \leq i_{\alpha} \leq j_{\alpha} \leq 4} (a_{i_0 ,j_0 , i_1 , j_1 , i_2 , j_2} ) x_{i_0} x_{j_0} \wedge x_{i_1} x_{j_1} \wedge x_{i_2} x_{j_2},$$
then $a_{i_0 ,j_0 , i_1 , j_1 , 4 , 4} = a_{i_0 ,j_0 , i_1 , j_1 , 3 , 4} = a_{i_0 ,j_0 , 3 , 3 , 2 , 4} = a_{1 , 4 , 2 , 3 , 2 , 4} = 0 $.  In particular, notice that $a_{i_0 ,j_0 , i_1 , j_1 , i_2 , j_2} = 0$ if
$$ i_0 + j_0 + i_1 + j_1 + i_2 + j_2 > 15. $$

Under the embedding determined by $\LL$, we write $(C,p)$ in terms of the basis of monomials of the form
$$ x_4^3 \prod_{\alpha = 1}^2 a_{i_{0 \alpha}, j_{0 \alpha}, i_{1 \alpha}, j_{1 \alpha}, i_{2 \alpha}, j_{2 \alpha}}. $$
Now, consider the 1-parameter subgroup with weights $(-2,-1,0,1,2)$.  It acts on the monomial above with weight $6 + 2(12 - (i_0 + j_0 + i_1 + j_1 + i_2 + j_2 )) $, which is negative when $i_0 + j_0 + i_1 + j_1 + i_2 + j_2 > 15$.  By assumption, this is not the case, so $(C,p) \notin X^s ( \LL )$.  Since $(C,p)$ was arbitrary, it follows that $X^s ( \LL ) = \emptyset$.

\end{proof}

We now let $\LL (0) = \OO_X (3,2)$,and $\LL_-$ be a line bundle lying in the chamber of $C^G (X)$ adjacent to $\LL (0)$.  By general variation of GIT, we know that there is a morphism $X^{ss} ( \LL_- )//G \to X^{ss} ( \LL (0))//G$.  Moreover, since the linearization $\LL (0)$ admits no stable points, the corresponding quotient is in some sense degenerate -- it satisfies the categorical definition of a quotient but not the geometric definition.  We show that $\phi_5$ is equal to the composition of the natural map $\overline{M}_{5,1} \dashrightarrow X^{ss} ( \LL_- )//G$ with this map.

\begin{proposition}
The map $X^{ss} ( \LL_- )//G \to X^{ss} ( \LL (0))//G$ is the same as the natural map $X^{ss} ( \LL_- )//G \to \overline{M}_{0,4}$ described in the introduction.
\end{proposition}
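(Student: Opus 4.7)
The plan is to show that $\phi_5$, as a $G$-invariant morphism into the projective variety $\overline{M}_{0,4}$, factors through the wall-crossing morphism, and that the resulting morphism $X^{ss}(\LL(0))//G \to \overline{M}_{0,4}$ is the claimed identification.

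For the factorization, I exploit that a $G$-invariant morphism with projective target is constant on $G$-orbit closures in the semistable locus $X^{ss}(\LL_-)$. By general variation of GIT, two points of $X^{ss}(\LL_-)//G$ have the same image in $X^{ss}(\LL(0))//G$ precisely when their $G$-orbit closures meet in $X^{ss}(\LL(0))$. The previous proposition's proof exhibits the one-parameter subgroup $\lambda(t) = \mathrm{diag}(t^{-2}, t^{-1}, 1, t, t^2)$, in coordinates adapted to $(C,p)$, destabilizing every $(C,p) \in X$ for $\LL(0)$; since $\lambda$ has weight zero on $\LL(0)$, the limit $(C_0, p_0) := \lim_{t \to 0}\lambda(t)\cdot(C,p)$ is a polystable representative of the S-equivalence class of $(C,p)$ lying in its $G$-orbit closure. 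Concretely, the surviving Pl\"ucker coordinates in $(C_0,p_0)$ are exactly those $a_{i_0, j_0, i_1, j_1, i_2, j_2}$ with $i_0 + j_0 + i_1 + j_1 + i_2 + j_2 = 15$, i.e., the extremal coefficients allowed by the vanishing conditions from the previous proof. Thus any two S-equivalent points share a common polystable representative in their orbit closures, so $\phi_5$ takes the same value on them, and hence descends to a morphism $\tilde\phi_5: X^{ss}(\LL(0))//G \to \overline{M}_{0,4}$ compatible with the wall-crossing morphism.

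It remains to verify that $\tilde\phi_5$ realizes the claimed identification. Surjectivity is inherited from $\phi_5$. The crucial injectivity reduces to an explicit local computation in the adapted coordinates: identify the four marked points on the second exceptional $\PP^1$ (the strict transforms of $C$, the residual twisted cubic $R = (H \cap Z) \setminus T_p C$ with $H = \{x_0 = 0\}$ and $Z = Q_1 \cap Q_2$, the tangent line $T_p C = \{x_0 = x_1 = x_2 = 0\}$, and the first exceptional divisor $E$) with rational functions of the weight-$15$ Pl\"ucker coordinates parametrizing the $\lambda$-fixed locus, and verify that the resulting cross-ratio separates orbits under the residual action of the normalizer of $\lambda$ in $G$.

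The main technical obstacle is this injectivity step: explicitly describing the $\lambda$-fixed locus in $X$ and the normalizer $N_G(\lambda)$, computing $R$ to the required order at $p$, tracking the four strict transforms through the two blow-ups of $Z$ at $p$, and checking that the resulting 4-point cross-ratio separates $N_G(\lambda)$-orbits on the fixed locus. This is a careful but direct local calculation near $p$, in the spirit of the coordinate analysis already carried out in the proof of the previous proposition.
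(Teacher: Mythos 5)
Your proposal is a plan rather than a proof: the step you defer as the ``main technical obstacle'' --- expressing the four marked points on the second exceptional $\PP^1$ in the adapted coordinates and showing that the resulting cross-ratio determines the identifications made at the wall --- is precisely the entire content of the paper's proof. The paper carries this out explicitly: after normalizing $p=(0,0,0,0,1)$, $T_pC = \{x_0=x_1=x_2=0\}$ and scaling so that $a_{1,0,4}=a_{2,1,4}=a_{3,2,4}=1$, it intersects $S=Q_1\cap Q_2$ with the osculating hyperplane $x_0=0$, performs the two blow-ups in local coordinates, and finds that the image in $\overline{M}_{0,4}$ is the point $u'=a_{2,2,3}a_{1,1,3}/a_{1,2,2}$; it then computes the flat limit of $(C,p)$ under the one-parameter subgroup of weights $(-2,-1,0,1,2)$ and acts by the residual diagonal matrices $\mathrm{diag}(\alpha^{-1},\alpha,\beta,1,\beta^{-1})$ to put the limit in a normal form depending only on that ratio, concluding that any two semistable points with the same image in $\overline{M}_{0,4}$ lie in a common orbit closure and hence map to the same point of $X^{ss}(\LL(0))//G$. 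Without that computation your argument supplies only the formal VGIT framing, not the proposition.

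The portion you do argue also has gaps. The factorization of $\phi_5$ through the wall-crossing map invokes the fact that a $G$-invariant \emph{morphism} to a projective variety is constant on orbit closures; but $\phi_5$ is only a rational map (defined through the two blow-ups of the del Pezzo surface), and it is not defined, let alone continuous, at the degenerate configurations where orbit closures of S-equivalent points actually meet (for instance the limit curves cut out by $x_0x_4+x_1x_3+x_2^2$, $x_1x_4+cx_2x_3$, $x_2x_4-x_3^2$), so constancy on orbit closures is not free. Similarly, the claim that the $\lambda$-limit is a \emph{polystable} representative is unjustified: $\mu_\lambda=0$ yields a semistable $\lambda$-fixed point in the orbit closure, but nothing guarantees its orbit is closed. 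Note also that the paper's reduction runs in the opposite direction from your factorization step: it shows that equal image in $\overline{M}_{0,4}$ forces equal image in $X^{ss}(\LL(0))//G$, which is exactly the implication your proposal leaves unproven.
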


\begin{proof}

It suffices to show that, if two points in $X^{ss} ( \LL_-)$ have the same image under the map $X \dashrightarrow \overline{M}_{0,4}$, then they have the same image under the map $X^{ss} ( \LL_- )//G \to X^{ss} ( \LL (0))//G$.  To prove this, we show that both points lie in the same orbit closure.  As above, we assume that $p = (0,0,0,0,1)$, and that $C = Q_1 \cap Q_2 \cap Q_3$, where
$$ Q_{\alpha} = \sum_{0 \leq i \leq j \leq 4} a_{\alpha , i,j} x_i x_j. $$
Also, as above, we assume that $a_{\alpha ,i,j} = 0$ if $i+j > 3 + \alpha$.  Note that, by acting on this curve by a diagonal matrix, we may further assume that $a_{3,3,3} = - a_{3,2,4}$.  We also note that, since $(C,p) \in X^{ss} ( \LL_- )$, the terms $a_{1,0,4}, a_{2,1,4}, a_{3,2,4}$ are all nonzero.  This can be verified using one-parameter subgroups.  We can therefore scale all of the quadrics so that $a_{1,0,4} = a_{2,1,4} = a_{3,2,4} = 1$.

We now determine the image of $(C,p)$ under the map to $\overline{M}_{0,4}$.  Notice that the del Pezzo surface $S$ is the intersection $Q_1 \cap Q_2$, and the osculating hyperplane to $C$ at $p$ is cut out by $x_0 = 0$.  Taking the intersection of $S$ with this hyperplane and projecting onto the the tangent plane $x_0 = x_1 = 0$, $x_4 = 1$, we obtain a curve in $\mathbb{A}^2$ cut out by the following equation:
$$ x_2 ( a_{1,1,1}a_{1,2,2} x_2 - a_{1,1,1}a_{2,2,3}a_{1,1,3}x_3^2 + \text{higher order terms} ). $$
Simplifying this, we see two components.  $T_p C$ is cut out by $x_2 = 0$, and $R$ is cut out by
$$ a_{1,2,2} x_2 - a_{2,2,3}a_{1,1,3}x_3^2 + \text{higher order terms}. $$
We obtain the inverse image of $R$ in the blow-up by considering this equation along with $t x_2 = u x_3$ in $\mathbb{A}^2 \times \PP^1$.  If $t \neq 0$, we can set $t=1$ and substitute $x_2 = u x_3$ to obtain:
$$ a_{1,2,2} u x_3 - a_{2,2,3}a_{1,1,3} x_3^2 + \text{higher order terms}  $$
$$ = x_3 (a_{1,2,2} u - a_{2,2,3}a_{1,1,3} x_3 + \text{higher order terms} ), $$
which intersects the exceptional divisor at the point $u = 0$.  To blow up the resulting surface at this point, we repeat this process and substitute $u = u' x_3$ to obtain:
$$x_3 (a_{1,2,2} u' - a_{2,2,3}a_{1,1,3} + \text{higher order terms} ),$$
which intersects the exceptional divisor at the point $u' = \frac{a_{2,2,3}a_{1,1,3}}{a_{1,2,2}}$.  We therefore see that, in the coordinates $(t',u')$, the points of intersection of this $\PP^1$ with the strict transforms of $R$, $E$, and the tangent line to $C$ at $p$ are the points $( a_{2,2,3}a_{1,1,3}, a_{1,2,2} )$, $(0,1)$, and $(1,0)$ respectively.  A similar calculation shows that the strict transform of $C$ intersects the exceptional divisor at the point $(1,1)$.  We now show that every curve with the same ratio $\frac{a_{2,2,3}a_{1,1,3}}{a_{1,2,2}}$ lies in the same orbit closure.

Consider again the 1-parameter subgroup with weights $(-2,-1,0,1,2)$.  The flat limit of $(C,p)$ under this 1-parameter subgroup is cut out by the following quadrics:
$$ Q_1 = x_0 x_4 + a_{1,1,3}x_1 x_3 + a_{1,2,2}x_2^2 ,$$
$$ Q_2 = x_1 x_4 + a_{2,2,3}x_2 x_3 ,$$
$$ Q_3 = x_2 x_4 - x_3^2 .$$
Now, consider the image of these quadrics under the action of the diagonal matrix
$$diag(\alpha^{-1} , \alpha , \beta , 1 \beta^{-1} ).$$
The image is
$$ Q_1 = x_0 x_4 + \alpha^2 \beta a_{1,1,3} x_1 x_3 + \alpha \beta^3 a_{1,2,2}x_2^2 ,$$
$$ Q_2 = x_1 x_4 + \frac{\beta^2}{\alpha} a_{2,2,3}x_2 x_3 ,$$
$$ Q_3 = x_2 x_4 - x_3^2 .$$
Notice that, if $a_{1,1,3}$ and $a_{1,2,2}$ are nonzero, then you can choose values for $\alpha$ and $\beta$ that make $\alpha^2 \beta a_{1,1,3} = \alpha \beta^3 a_{1,2,2} = 1$.  This forces $\frac{\beta^2}{\alpha} a_{2,2,3} = \frac{a_{2,2,3}a_{1,1,3}}{a_{1,2,2}}$.  We therefore see that our curve lies in the same orbit closure as the one cut out by the three quadrics:
$$ Q_1 = x_0 x_4 + x_1 x_3 + x_2^2 ,$$
$$Q_2 = x_1 x_4 + \frac{a_{2,2,3}a_{1,1,3}}{a_{1,2,2}} x_2 x_3 ,$$
$$ Q_3 = x_2 x_4 - x_3^2 .$$
So all curves with the same such ratio are identified under the map $X^{ss} \to X^{ss} ( \LL )//G$.  (A similar argument shows this result to hold if either $a_{1,1,3} = 0$ or $a_{1,2,2} = 0$.)  This completes the proof.

\end{proof}

\begin{proposition}

There is a birational contraction
$$p : \overline{M}_{5,1} \dashrightarrow X^{ss}(-) // G.$$

\end{proposition}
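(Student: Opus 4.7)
The plan is to mimic the structure of the genus 6 argument used in Section 3, producing an open set $U \subseteq X^{ss}(\LL_-)//G$ whose complement has codimension at least two and on which there is a morphism to $\overline{M}_{5,1}$ giving the inverse of $p$ generically. First I would establish the dimension count: $\dim X = 36 + 1 = 37$ (since $X$ is the universal curve over $Z = G(3, H^0(\mathcal{O}_Y(2)))$ with fibers of dimension $1$), and $\dim G = \dim PSL(5,\mathbb{C}) = 24$, so the GIT quotient has the expected dimension $13 = \dim \overline{M}_{5,1}$. Since the general genus 5 curve is canonically embedded in $\PP^4$ as a complete intersection of 3 quadrics uniquely up to the action of $PSL(5,\mathbb{C})$, the natural rational map $X \dashrightarrow \overline{M}_{5,1}$ is $G$-invariant and the induced rational map $X^{ss}(\LL_-)//G \dashrightarrow \overline{M}_{5,1}$ is generically injective, hence birational.

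Next, to get a birational contraction in the opposite direction, I would let $U \subseteq X^{ss}(\LL_-)//G$ be the locus parameterizing (GIT-equivalence classes of) pairs $(C,p)$ such that $C$ is a Deligne-Mumford stable pointed curve of genus 5, canonically embedded in $\PP^4$. On $U$ the universal family over $X^{ss}(\LL_-)$ descends to a family of moduli stable pointed curves, so by the universal property of $\overline{M}_{5,1}$ there is a morphism $U \to \overline{M}_{5,1}$. Generic injectivity promotes this to an isomorphism onto its image, giving $p^{-1}|_U$; pulling back divisors then shows $p$ is a birational contraction by the same argument used in the genus 6 proposition.

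The main obstacle is verifying that the complement of $U$ has codimension at least two in $X^{ss}(\LL_-)//G$. The locus of singular canonical curves in $Z$ is an irreducible divisor (the discriminant), and the same holds for its pullback to $X$; what must be checked is that the \emph{generic} semistable singular pair $(C,p)$ already corresponds to a moduli stable pointed curve. Concretely, the generic singular complete intersection of 3 quadrics in $\PP^4$ is an irreducible curve with a single node, and a general marked point on such a curve avoids the node, giving a DM-stable pointed curve; the codimension-one locus inside the discriminant where either the curve is reducible or non-nodal, or where $p$ coincides with a node, must be shown to lie in codimension $\geq 2$ overall. This requires an analysis of the GIT semistability condition relative to $\LL_-$ of the form done in the previous two propositions: one checks with the numerical criterion that curves with more severe singularities, or configurations where the marked point sits on a non-stable component, are either unstable or get identified in the quotient with lower-dimensional strata.

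Once this codimension estimate is in hand, the remaining verifications are formal: injectivity of $U \to \overline{M}_{5,1}$ follows from the uniqueness of the canonical embedding and the fact that two pairs $(C,p), (C',p') \in U$ lie in the same $G$-orbit if and only if they are isomorphic as pointed curves (the general genus 5 curve has trivial automorphism group, and the boundary strata meeting $U$ are handled on a case-by-case basis using the rigidity of moduli stable curves). The conclusion then matches the genus 6 argument: the rational map $p$ admits an inverse defined on an open set of codimension at least two in the target, which is exactly the definition of a birational contraction for normal projective varieties.
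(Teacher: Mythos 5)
Your overall strategy coincides with the paper's: take $U$ to be the locus of moduli stable pointed curves in $X^{ss}(\LL_-)$, argue that its complement has codimension at least two in the quotient, and then use the universal property of $\overline{M}_{5,1}$ plus the fact that every smooth complete intersection of three quadrics in $\PP^4$ is canonically embedded (so two such curves are isomorphic if and only if they differ by an automorphism of $\PP^4$) to build an inverse morphism that is an isomorphism onto its image; exhibiting such an inverse on an open set with small complement is exactly the criterion the paper uses for a birational contraction. The dimension count and the generic-injectivity discussion are fine but are not the substance of the proposition.

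The genuine gap is the step you yourself call the main obstacle: the codimension-two estimate is never proved, only reduced to a proposed ``analysis of the GIT semistability condition relative to $\LL_-$'' via the numerical criterion, which you do not carry out. That heavy analysis is also not the efficient route. The paper's argument is soft: the complement of $U$ is a $G$-invariant closed subset strictly contained in the discriminant $\Delta$, which is an irreducible $G$-invariant hypersurface of $X^{ss}(-)$ (the generic singular complete intersection is irreducible with one node and a general marked point misses it, as you note). The only delicate point --- which your write-up does not address --- is that strict inclusions of invariant closed sets need not stay strict after passing to the GIT quotient, since non-closed orbits can be identified. The paper handles this with stable points: for $(C,p)\in X^s(-)\setminus\Delta$, respectively $(C,p)\in X^s(-)\cap\Delta\cap U$, the (closed) orbit misses $\Delta$, respectively $X^{ss}(-)\setminus U$, so its image avoids $\Delta//G$, respectively $(X^{ss}(-)\setminus U)//G$; hence $(X^{ss}(-)\setminus U)//G \subsetneq \Delta//G \subsetneq X^{ss}(-)//G$, and since $\Delta//G$ is irreducible the complement of the image of $U$ is a proper closed subset of a proper irreducible divisor, so it has codimension at least two. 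Without either this argument or the completed numerical-criterion analysis you defer, the key hypothesis of the birational-contraction criterion is asserted rather than established, so the proof as written is incomplete at its central point.
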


\begin{proof}

It suffices to exhibit a morphism $p^{-1}: V \to \overline{M}_{5,1}$, where $V \subseteq X^{ss}(-)/G(-)$ is open with complement of codimension $\geq 2$ and $p^{-1}$ is an isomorphism onto its image.  Let $U \subseteq X^{ss}(-)$ be the set of all moduli stable pointed curves $(C,p) \in X^{ss}(-)$.  Notice that the complement of $U$ is strictly contained in the locus $\Delta$ of singular curves, which is an irreducible hypersurface in $X^{ss}(-)$.  Thus, in the quotient, we have the containment $(X^{ss}(-) \backslash U) // \subset \Delta // G \subset X^{ss}(-) // G$, and $\Delta // G$ is irreducible.  Notice, furthermore, that both $\Delta$ and $X^{ss}(-) \backslash U$ are $G$-invariant,  so if $(C,p) \in X^s(-) \backslash \Delta$ (respectively, $(C,p) \in X^s(-) \cap \Delta \cap U$), then the orbit of $(C,p)$ does not intersect $\Delta$ (respectively, $X^{ss}(-) \backslash U$).  Since this point is stable, this means that the image of $(C,p)$ is not contained in $\Delta // G$ (respectively, $(X^{ss}(-) \backslash U) // G$).  Thus, the containments $(X^{ss}(-) \backslash U) // G \subset \Delta // G$ and $\Delta // G \subset X^{ss}(-) // G$ are strict.  It follows that the complement of $U$ has codimension $\geq 2$.

By the universal property of the moduli space, since $U \to Z$ is a family of moduli stable curves, it admits a unique map $U \to Z \to \overline{M}_{5,1}$.  This map is certainly $G$-equivariant, so it factors uniquely through a map $U // G \to \overline{M}_{5,1}$.  Since every smooth complete intersection of 3 quadrics in $\PP^4$ is a canonical genus 5 curve, two such curves are isomorphic if and only if they differ by an automorphism of $\PP^4$.  It follows that this map is an isomorphism onto its image.

\end{proof}

This proposition places us in a setting where we may use the facts from birational geometry above.  In particular, we see that the map $\phi_5$ can be expressed as the composition of a birational contraction and a morphism.

We now consider the three boundary divisors on $\overline{M}_{0,4}$.

\begin{proposition}
\label{BoundaryDivisors}
The pointed Brill-Noether divisors $BN^1_{5,(0,5)}$, $BN^1_{4,(0,3)}$, and $BN^2_{6,(0,2,4)}$ are each contained in the pullback by $\phi_5$ of boundary divisors on $\overline{M}_{0,4}$.  (See Figures \ref{Weierstrass}, \ref{Other}, and \ref{Logan}.)
\end{proposition}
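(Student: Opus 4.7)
The plan is, for each pointed Brill-Noether divisor, to compute the image of a generic point under $\phi_5$ and verify that it lands in the boundary of $\overline{M}_{0,4} \cong \PP^1$. Since the four marked points on the exceptional $\PP^1$ of the second blow-up are labelled by the strict transforms of $C$, $R$, $E$, and $T_p C$, each boundary point of $\overline{M}_{0,4}$ corresponds to a partition of these four into two pairs, and the task reduces to identifying which pair is forced to collide for each pointed Brill-Noether divisor.

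For the Weierstrass divisor $BN^1_{5,(0,5)}$ the argument is direct. A generic point satisfies $h^0(C, 5p) \geq 2$, equivalently $(H_{osc} \cdot C)_p \geq 5$, where $H_{osc}$ is the osculating hyperplane at $p$. Since $H_{osc} \cap Z = T_p C + R$ and the local intersection multiplicity $(T_pC \cdot C)_p = 2$ (because $T_pC$ is a $(-1)$-curve on the smooth quartic del Pezzo $Z$ and $C \in |-2K_Z|$, so $T_pC \cdot C = 2$), we conclude $(R \cdot C)_p \geq 3$. In local coordinates on $Z$ with $T_pC: y = 0$, $C: y = c_2 x^2 + \ldots$, and $R: y = d_2 x^2 + \ldots$, this forces $c_2 = d_2$, so the strict transforms of $C$ and $R$ meet $E'$ at the same point. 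Hence $\phi_5((C, p))$ lies in the boundary of $\overline{M}_{0,4}$ corresponding to the partition $\{C, R\} \mid \{T_pC, E\}$.

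For $BN^1_{4,(0,3)}$ and $BN^2_{6,(0,2,4)}$ I would use the explicit coordinate description of $\phi_5$ from the proof of Proposition 4.6: on the semistable locus, $\phi_5$ sends $(C, p)$ to the cross-ratio $\lambda = a_{2,2,3}\, a_{1,1,3} / a_{1,2,2}$, where the $a_{\alpha, i, j}$ are the normalised quadric coefficients, and the three boundary points of $\overline{M}_{0,4}$ correspond to $\lambda = 0, 1, \infty$. The defining condition of $BN^1_{4,(0,3)}$, namely the existence of a point $q \neq p$ on $C$ lying in the osculating $2$-plane $L^{(2)}_pC = \{x_0 = x_1 = 0\}$, forces $Q_1(q) = a_{1,2,2} \cdot (\text{coord})^2 = 0$; if $q$ avoids the tangent line (the generic case) this gives $a_{1,2,2} = 0$, so $Q_1$ becomes singular along $L^{(2)}_pC$ and $\lambda = \infty$, placing the image in the boundary $\{R, T_pC\} \mid \{C, E\}$. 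Similarly, $BN^2_{6,(0,2,4)}$ is characterised by the existence of a $g^2_6 = K - D$ with $h^0(L - 4p) \geq 1$ and $h^0(L - 2p) \geq 2$, which by Riemann-Roch means $D$ is supported in $H_{osc} \cap C - 4p$ and $D + T_pC$ is coplanar; unwinding this on the twisted cubic $R$ forces two of the residual points of $H_{osc} \cap C$ to coincide, which should translate to $a_{1,1,3} a_{2,2,3} = 0$, giving $\lambda = 0$ and the boundary $\{R, E\} \mid \{C, T_pC\}$.

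The main obstacle will be making these last two translations precise, because the Brill-Noether conditions involve points of $C$ other than $p$, while the coefficients entering $\lambda$ record only local data at $p$. A clean alternative, which avoids these global-to-local translations, is to appeal to Proposition 4.5: since all three pointed Brill-Noether divisors pull back to $X$ as $G$-invariant effective divisors proportional in class to $\OO_X(3,2)$, each corresponds to a $G$-invariant section of some $\OO_X(3,2)^{\otimes n}$, and the $G$-invariant section ring is precisely what defines the map to $\overline{M}_{0,4} \cong \PP^1$. Any such section necessarily factors as a monomial in the generators associated to the three boundary points, so every pointed Brill-Noether divisor is automatically contained in the pullback of a boundary divisor; identifying which one then reduces to checking a single test curve, exactly as in the Weierstrass case above.
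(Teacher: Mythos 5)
Your treatment of the Weierstrass divisor is correct and is essentially the paper's argument: from $(C\cdot T_pC)_p=2$ (the paper uses the no-trisecant property of a quadric-section curve rather than the intersection number $T_pC\cdot(-2K)$, but both work) one gets $(C\cdot R)_p\geq 3$, so the strict transforms of $C$ and $R$ hit the second exceptional $\PP^1$ at the same point. For the other two divisors, however, your proposal has a genuine gap. The $BN^1_{4,(0,3)}$ computation is plausible (it does need the observation that the osculating $2$-plane equals $\{x_0=x_1=0\}$, which follows because $Q_1,Q_2$ contain $T_pC$, plus a genericity statement that $q\notin T_pC$), but the $BN^2_{6,(0,2,4)}$ case is exactly the step you defer: the assertion that the condition ``forces two of the residual points of $H_{osc}\cap C$ to coincide, which should translate to $a_{1,1,3}a_{2,2,3}=0$'' is not an argument, and it is precisely the global-to-local translation you identify as the obstacle. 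The paper avoids this by staying geometric on the del Pezzo $S$: a $g^1_4$ ramified at $p$ corresponds to a divisor $T_pC+L$ with $L$ a $(-1)$-curve meeting $T_pC$; for $BN^1_{4,(0,3)}$ this produces a $(-1)$-curve $L\neq T_pC$ through $p$, so $R$ is singular at $p$; for $BN^2_{6,(0,2,4)}$ the two $g^1_4$'s $D-2p$ and $K-D+2p$ force $R=L\cup L'\cup T_pC$, so $R$ contains $T_pC$ as a component. In each case the collision of marked points on the exceptional $\PP^1$ is then immediate, with no GIT coordinates needed.

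Your proposed ``clean alternative'' does not repair this. From Proposition 4.5 you know each pointed Brill--Noether divisor pulls back to a $G$-invariant section of some $\OO_X(3,2)^{\otimes n}$, i.e.\ to a degree-$n$ element of the invariant section ring defining the map to $\overline{M}_{0,4}\cong\PP^1$; but such a section is an arbitrary binary form of degree $n$ on $\PP^1$, and there is no reason it must ``factor as a monomial in the generators associated to the three boundary points.'' Invariance constrains nothing about where its zeros on $\PP^1$ lie, so this route at best could show (with an additional fiber-restriction argument you do not give) that each divisor maps to finitely many points, not to boundary points -- and the boundary identification is the content of the proposition and is used later (e.g.\ to see that $BN^1_{5,(0,5)}$ and $BN^1_3$ map to the same point). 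Finally, a small but real slip: with $E\mapsto\infty$, $T_pC\mapsto 0$, $C\mapsto 1$, $R\mapsto\lambda$, the value $\lambda=\infty$ means $R$ collides with $E$ (partition $\{R,E\}\mid\{C,T_pC\}$) and $\lambda=0$ means $R$ collides with $T_pC$; you have these two labels interchanged, though this does not affect the bare statement that the image lies in the boundary.
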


\begin{proof}
We first note that, by Riemann-Roch,  $BN^1_{5,(0,5)}$ is the locus of pointed curves $(C,p)$ such that $p$ is a Weierstrass point of $C$.  Now, if $(C,p)$ is an element of the Weierstrass divisor , then the osculating hyperplane to $C$ at $p$ vanishes to order 5 at $p$.  In terms of the intersection product on $S$, this means that $(C \cdot (T_p C + R))_p = 5$.  If $C$ is not trigonal, then no line intersects $C$ in three points, so $(C \cdot T_p C)_p = 2$.  This means that $(C \cdot R)_p = 3$.  In other words, the strict transforms of $C$ and $R$ pass through the same point of the exceptional divisor.  It follows that the Weierstrass divisor is contained in the pullback of the point pictured in Figure \ref{Weierstrass}.
\begin{figure}[!htb]
\begin{center}
\includegraphics[scale=2]{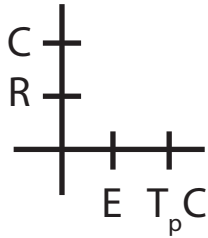}
\caption{Image of the Weierstrass divisor $BN^1_{5,(0,5)}$}
\label{Weierstrass}
\end{center}
\end{figure}

\begin{figure}[!htb]
\begin{center}
\includegraphics[scale=2]{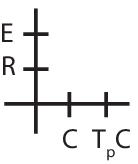}
\caption{Image of the Pointed Brill-Noether Divisor $BN^1_{4,(0,3)}$}
\label{Other}
\end{center}
\end{figure}

\begin{figure}[!htb]
\begin{center}
\includegraphics[scale=2]{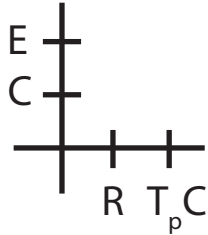}
\caption{Image of the Pointed Brill-Noether Divisor $BN^2_{6,(0,2,4)}$}
\label{Logan}
\end{center}
\end{figure}

Suppose that $2p + q + r$ is a $g^1_4$ on $C$, ramified at $p$.  By Riemmann-Roch, there is a plane in $\PP^4$ containing $q,r,$ and the line $T_p C$.  This means that the line through $q$ and $r$ intersects the line $T_p C$.  Since $S$ contains three points on this line, and $S$ is a complete intersection of quadrics, this line lies on $S$.  It follows that the 5 $g^1_4$'s on $C$ that are ramified at $p$ are cut out by divisors on $S$ of the form $T_p C + L$, where $L$ is a $-1$ curve on $S$ that intersects $T_p C$.  If $(C,p) \in BN^1_{4,(0,3)}$, then there must be a $-1$ curve $L$ on $S$, other than $T_p C$, that passes through $p$.  We see from this description that $R$ is the union of $L$ and another rational curve passing through $p$.  Since $R$ is singular at $p$, its inverse image under the first blow-up contains the exceptional divisor with multiplicity 2.  It follows that this pointed Brill-Noether divisor is contained in the pullback of the point pictured in Figure \ref{Other}.

Now, let $(C,p) \in BN^2_{6,(0,2,4)}$.  By definition, there exists a $g^2_6$ $D$ on $C$ with ramification sequence $(0,2,4)$ at $p$.  It follows that $D-2p$ and $K-D+2p$ are both $g^1_4$'s on $C$ that are ramified at $p$.  From the description above, we see that there are two $-1$ curves $L, L'$ on $S$ such that $L + L' + 2T_p C$ is a hyperplane section of $S$.  We therefore see that $R$ is the union of $L,L',$ and $T_p C$.  Since $R$ contains $T_p C$ as a component, this divisor is contained in the pullback of the point pictured in Figure \ref{Logan}.
\end{proof}

These descriptions of pointed Brill-Noether divisors determine fundamental properties of the map $\phi_5$.  In particular,

\begin{theorem}
\label{MainTheorem}
$BN^1_{4,(0,3)}$ is the pullback by $\phi_5$ of an ample divisor on $\overline{M}_{0,4}$.
\end{theorem}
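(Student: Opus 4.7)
Since $\overline{M}_{0,4} \cong \mathbb{P}^1$ has Picard rank one and every non-zero effective divisor on it is ample, it suffices to show that $[BN^1_{4,(0,3)}]$ and $\phi_5^*[p_0]$ are proportional in $NS(\overline{M}_{5,1})$, where $p_0 \in \overline{M}_{0,4}$ is the boundary point depicted in Figure \ref{Other}. By Proposition \ref{BoundaryDivisors} we already have the containment $BN^1_{4,(0,3)} \subseteq \phi_5^*(p_0)$, so the task is to rule out any extra divisorial contribution whose class is not a non-negative multiple of $[BN^1_{4,(0,3)}]$.

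My plan is to pull back to $X$ via the rational map $X \dashrightarrow \overline{M}_{5,1}$ and compare numerical classes there. Since the three boundary points of $\overline{M}_{0,4} \cong \mathbb{P}^1$ are linearly equivalent, the three pullbacks $\phi_5^*[p_i]$ agree as classes on $\overline{M}_{5,1}$, and by Proposition \ref{BoundaryDivisors} each contains one of the three pointed Brill-Noether divisors $BN^1_{5,(0,5)}$, $BN^1_{4,(0,3)}$, $BN^2_{6,(0,2,4)}$. By the earlier proposition on pullbacks of pointed Brill-Noether divisors, each of these pulls back to $X$ as a positive multiple of $\mathcal{O}_X(3,2)$. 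Intersecting the pullback of $\phi_5^*[p_0]$ against the test curves $F_1$ (a fiber of $f$) and $F_2$ (a Lefschetz pencil on a degree-$4$ del Pezzo surface with fixed basepoint) should show that this class is the same multiple of $\mathcal{O}_X(3,2)$ as the pullback of $BN^1_{4,(0,3)}$. The relevant intersection numbers can be computed directly from the cross-ratio description of $\phi_5$, namely the degree of $\phi_5|_{F_i}$ over $\overline{M}_{0,4}$ together with ramification at $p_0$.

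The main obstacle is transferring proportionality on $X$ back to $\overline{M}_{5,1}$. The rational map $X \dashrightarrow \overline{M}_{5,1}$ factors through the birational contraction $p: \overline{M}_{5,1} \dashrightarrow X^{ss}(-)//G$, whose exceptional divisors are collapsed to lower-dimensional loci in $X^{ss}(-)//G$ and so are invisible to the numerical test on $X$. To finish, I would use the explicit GIT stability analysis carried out earlier in this section to enumerate the $p$-exceptional divisors and verify, via the cross-ratio description of $\phi_5$, that none of them is contained in the fiber of $\phi_5$ over $p_0$; any $p$-exceptional divisor parametrizes a specific degeneration of the pair $(C,p)$, and one can read off whether the associated configuration of strict transforms of $C$, $R$, $E$ and $T_pC$ on the twice-blown-up del Pezzo lies at the boundary point $p_0$. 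Once these extra contributions are ruled out, the proportionality computed on $X$ lifts to proportionality on $\overline{M}_{5,1}$, completing the proof.
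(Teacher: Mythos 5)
Your strategy is genuinely different from the paper's, but as it stands it has a real gap. The plan rests on comparing numerical classes after pulling back to $X$, yet the pullback map to $NS(X)\cong\Z\times\Z$ kills precisely the classes that are at issue: the paper's own Proposition on pullbacks shows the trigonal divisor $BN^1_3$ pulls back to zero (it has intersection $0$ with both $F_1$ and $F_2$), and more generally every divisor contracted by the birational contraction $p:\overline{M}_{5,1}\dashrightarrow X^{ss}(-)//G$ (boundary classes $\delta_i$ with $i\geq 1$, the trigonal locus, etc.) is invisible on $X$. Indeed $BN^1_{4,(0,3)}$ and $BN^1_{5,(0,5)}$ have the \emph{same} pullback to $X$, even though they sit over different boundary points, so proportionality on $X$ by itself cannot pin down $\phi_5^*[p_0]$. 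You acknowledge this and propose to repair it by enumerating all $p$-exceptional divisors and checking that none lies in the fiber over $p_0$; but this enumeration is nowhere carried out, is not supplied by the GIT stability analysis in the paper, and is essentially as hard as the theorem itself --- note that such exceptional divisors \emph{do} lie in other fibers (e.g.\ $BN^1_3$ lies over the Weierstrass boundary point), so the claim over $p_0$ requires a genuine argument, not just bookkeeping. In addition, two steps you treat as routine are not: the compatibility $ \bigl(X\dashrightarrow\overline{M}_{5,1}\bigr)^*\phi_5^*[p_0] = \bigl(X\dashrightarrow\overline{M}_{0,4}\bigr)^*[p_0]$ needs justification, since pullback along rational maps (defined via $p_*q^*$ on a resolution) is not functorial in general, and the intersection numbers of the fiber class with $F_1$, $F_2$, including multiplicities of components, are asserted but never computed.

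The paper avoids all of this with a short dichotomy that your proposal never uses: writing $\phi_5^*\Delta = BN^1_{4,(0,3)}+E$ with $E$ effective, every component of $E$ maps to the same point as $BN^1_{4,(0,3)}$, so by Proposition \ref{Negativity of Fibration} either $E=0$ or $BN^1_{4,(0,3)}$ generates an extremal ray of $\overline{NE}^1(\overline{M}_{5,1})$; the latter is impossible because of Logan's relation $BN^1_{4,(0,3)} = BN^1_{5,(0,5)} + BN^1_3$, which exhibits it as a nontrivial sum of two other effective divisors. That decomposability is the key input, and without it (or a completed enumeration of the contracted divisors over $p_0$) your argument does not close.
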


\begin{proof}
By Proposition \ref{BoundaryDivisors}, we know that $BN^1_{4,(0,3)}$ is contained in the pullback by $\phi_5$ of a boundary divisor $\Delta$ on $\overline{M}_{0,4}$.  Hence $\phi_5^* \Delta = BN^1_{4,(0,3)} + E$, where $E$ is a sum of irreducible divisors on $\overline{M}_{5,1}$.  By definition, each of these irreducible divisors maps to the same point as $BN^1_{4,(0,3)}$, hence, by Proposition \ref{Negativity of Fibration}, either $E=0$ or $BN^1_{4,(0,3)}$ generates an extremal ray of $\overline{NE}^1 ( \overline{M}_{5,1} )$.

By Theorem 4.5 in \cite{Logan}, however, we know that $BN^1_{4,(0,3)} = BN^1_{5,(0,5)} + BN^1_3$, and thus $BN^1_{4,(0,3)}$ clearly does not lie on an extremal ray of $\overline{NE}^1 ( \overline{M}_{5,1} )$.  It follows that $BN^1_{4,(0,3)}$ is the pullback by $\phi_5$ of an ample divisor on $\overline{M}_{0,4}$.
\end{proof}

\begin{corollary}
The Weierstrass divisor $BN^1_{5,(0,5)}$ generates an extremal ray of $\overline{NE}^1 ( \overline{M}_{5,1} )$.
\end{corollary}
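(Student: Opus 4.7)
The plan is to apply Proposition \ref{Negativity of Fibration}, in parallel to the proof of Theorem \ref{MainTheorem}. By the preceding propositions $\phi_5$ factors as a birational contraction followed by a morphism, so Proposition \ref{Negativity of Fibration} is available. Proposition \ref{BoundaryDivisors} tells us that $BN^1_{5,(0,5)}$ is contained in the pullback $\phi_5^*\Delta_W$, where $\Delta_W$ is the boundary point of $\overline{M}_{0,4}$ depicted in Figure \ref{Weierstrass}. I would write
$$\phi_5^*\Delta_W = n\cdot BN^1_{5,(0,5)} + E$$
with $n\geq 1$ and $E$ effective, having no component equal to the Weierstrass divisor, and then aim to show $E\neq 0$.

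Once $E\neq 0$ is established, any irreducible component $D_2$ of $E$ satisfies $D_2 \neq BN^1_{5,(0,5)}$ and $\overline{\phi_5(D_2)} = \overline{\phi_5(BN^1_{5,(0,5)})} = \Delta_W \neq \overline{M}_{0,4}$, so Proposition \ref{Negativity of Fibration} immediately yields that $BN^1_{5,(0,5)}$ generates an extremal ray of $\overline{NE}^1(\overline{M}_{5,1})$, and the corollary follows.

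The main work is in ruling out the alternative $E=0$, and this is where I expect the real obstacle to lie. In that case $\phi_5^*\Delta_W$ would be a positive multiple of $BN^1_{5,(0,5)}$. Since all points of $\overline{M}_{0,4}\cong\PP^1$ are linearly equivalent, this pullback is linearly equivalent to the pullback of the boundary point in Figure \ref{Other}, which by Theorem \ref{MainTheorem} is a positive multiple of $BN^1_{4,(0,3)}$. Hence $BN^1_{5,(0,5)}$ and $BN^1_{4,(0,3)}$ would be numerically proportional. Invoking Logan's identity $BN^1_{4,(0,3)} = BN^1_{5,(0,5)} + BN^1_3$ (Theorem 4.5 of \cite{Logan}) and rearranging, I would obtain $(n-1)\,BN^1_{5,(0,5)} \equiv BN^1_3$. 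I would then rule this out by a direct class comparison on $\overline{M}_{5,1}$: Cukierman's formula (already used in Section 4) gives the $\omega$-coefficient of $BN^1_{5,(0,5)}$ as $15$, while $BN^1_3$ is pulled back from $\overline{M}_5$ along the forgetful map and so has vanishing $\omega$-coefficient. This forces $n=1$ and $BN^1_3 \equiv 0$, contradicting the fact that $BN^1_3$ is a nontrivial effective divisor and completing the argument.
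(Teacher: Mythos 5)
Your proposal is correct, and it rests on the same key tool as the paper, Proposition \ref{Negativity of Fibration}, but it gets there by a different decomposition. The paper's own proof is a two-step deduction: Theorem \ref{MainTheorem} together with Logan's identity $BN^1_{4,(0,3)} = BN^1_{5,(0,5)} + BN^1_3$ says that $BN^1_{5,(0,5)} + BN^1_3$ is the pullback of a point of $\overline{M}_{0,4}$, so the Weierstrass divisor and $BN^1_3$ are two distinct irreducible codimension one subvarieties with the same point as image, and Proposition \ref{Negativity of Fibration} applies immediately with $D_2 = BN^1_3$. You instead decompose the pullback of the boundary point of Figure \ref{Weierstrass}, take $D_2$ to be an (unnamed) component of the residual divisor $E$, and concentrate the work on excluding $E=0$, using linear equivalence of points on $\overline{M}_{0,4} \cong \PP^1$, Theorem \ref{MainTheorem}, Logan's relation, Cukierman's formula (already quoted in Section 4, with $\omega$-coefficient $15$), and the vanishing of the $\omega$-coefficient of the pullback $BN^1_3$ from $\overline{M}_5$; that numerical step is valid since the standard generators of $\mathrm{Pic}(\overline{M}_{5,1})_{\mathbb{Q}}$ are independent and a nonzero effective divisor on a projective variety cannot be numerically trivial. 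The trade-off: the paper's route is shorter because it identifies the second divisor explicitly, while yours only needs Logan's relation as an identity of divisor classes (and only in the contradiction branch), at the cost of the extra coefficient bookkeeping. One small refinement if you write it up: Theorem \ref{MainTheorem} gives $BN^1_{4,(0,3)}$ as the pullback of \emph{some} ample divisor, so the pullback of the point in Figure \ref{Other} should be taken to be a positive rational multiple $c\,BN^1_{4,(0,3)}$; the comparison then reads $(n-c)BN^1_{5,(0,5)} \equiv c\,BN^1_3$ and the $\omega$-coefficient argument goes through verbatim.
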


\begin{proof}
By Theorem \ref{MainTheorem}, we know that $BN^1_{5,(0,5)} + BN^1_3$ is the pullback by $\phi_5$ of a point on $\overline{M}_{0,4}$.  It follows that both irreducible divisors map to the same point, and hence, by Proposition \ref{Negativity of Fibration}, they both generate extremal rays of $\overline{NE}^1 ( \overline{M}_{5,1} )$.
\end{proof}

\begin{corollary}
$BN^2_{6,(0,2,4)}$ is numerically equivalent to a multiple of $BN^1_{4,(0,3)}$.
\end{corollary}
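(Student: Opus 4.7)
The plan is to mimic the strategy of Theorem~\ref{MainTheorem}. By Proposition~\ref{BoundaryDivisors}, the divisor $BN^2_{6,(0,2,4)}$ is contained in the pullback $\phi_5^{*}\Delta_L$, where $\Delta_L\in\overline{M}_{0,4}$ is the boundary point pictured in Figure~\ref{Logan}. Writing $\phi_5^{*}\Delta_L=a\cdot BN^2_{6,(0,2,4)}+E'$ with $a>0$ and $E'$ an effective divisor (not containing $BN^2_{6,(0,2,4)}$ in its support), every irreducible component of $E'$ also maps under $\phi_5$ to $\Delta_L$. Hence, by Proposition~\ref{Negativity of Fibration}, either $E'=0$ or $BN^2_{6,(0,2,4)}$ generates an extremal ray of $\overline{NE}^1(\overline{M}_{5,1})$.

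The key observation is that $\overline{M}_{0,4}\cong\mathbb{P}^1$ has Picard rank one, so any two points on it are numerically equivalent; in particular $\phi_5^{*}\Delta_L\equiv\phi_5^{*}\Delta$, where $\Delta$ is the boundary point of Figure~\ref{Other}. By Theorem~\ref{MainTheorem}, the latter class is a positive multiple of $BN^1_{4,(0,3)}$. Combining these equivalences yields
\[ a\cdot BN^2_{6,(0,2,4)}+E' \;\equiv\; c\cdot BN^1_{4,(0,3)} \]
for some $c>0$. When $E'=0$ this immediately gives $BN^2_{6,(0,2,4)}\equiv(c/a)\cdot BN^1_{4,(0,3)}$, and the corollary follows.

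The main obstacle is therefore to rule out the alternative, in which $E'\neq 0$ and $BN^2_{6,(0,2,4)}$ is extremal. By Theorem~4.5 of \cite{Logan} and the preceding corollary, $BN^1_{4,(0,3)}\equiv BN^1_{5,(0,5)}+BN^1_3$ decomposes as a positive combination of two distinct extremal rays, so a scalar multiple of $BN^1_{4,(0,3)}$ sits in the relative interior of the $2$-dimensional face they span. Rearranging the displayed equation as $E'\equiv c\cdot BN^1_{4,(0,3)}-a\cdot BN^2_{6,(0,2,4)}$, one sees that for $E'$ to lie in $\overline{NE}^1(\overline{M}_{5,1})$ the ray through $[BN^2_{6,(0,2,4)}]$ must lie in that same $2$-face, hence must coincide with the ray through $[BN^1_{5,(0,5)}]$ or through $[BN^1_3]$. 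To close the argument one excludes this residual coincidence by an explicit class-level comparison using the pointed Brill--Noether formulas of \cite{Logan}, thereby forcing $E'=0$ and the desired numerical proportionality.
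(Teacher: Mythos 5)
Your overall framing matches the paper's: apply the dichotomy from Theorem \ref{MainTheorem} (either $E'=0$, in which case $BN^2_{6,(0,2,4)}$ is the pullback of a point and hence proportional to $BN^1_{4,(0,3)}$, or $BN^2_{6,(0,2,4)}$ generates an extremal ray), and then rule out the extremal alternative. But the way you rule it out has a genuine gap. You argue that since $BN^1_{4,(0,3)}\equiv BN^1_{5,(0,5)}+BN^1_3$ with both summands extremal, a multiple of $BN^1_{4,(0,3)}$ lies in the relative interior of ``the $2$-face they span,'' so effectivity of $E'\equiv c\,BN^1_{4,(0,3)}-a\,BN^2_{6,(0,2,4)}$ forces the ray of $BN^2_{6,(0,2,4)}$ into that span. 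This presumes that the $2$-dimensional subcone spanned by two extremal rays is itself a face of $\overline{NE}^1(\overline{M}_{5,1})$, which does not follow from extremality of the individual rays (think of the cone over a square: two opposite edges are extremal, but the plane they span is not a face), and nothing in the paper establishes it. The paper gets the needed containment a different way: it cites \cite{Logan2} for the fact that $BN^2_{6,(0,2,4)}$ is numerically an (effective) combination of $BN^1_{5,(0,5)}$ and $BN^1_3$; then extremality of $BN^2_{6,(0,2,4)}$ in the alternative branch forces it to be proportional to one of those two divisors.

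The second weak point is your closing sentence: you defer the exclusion of the ``residual coincidence'' ($BN^2_{6,(0,2,4)}$ proportional to $BN^1_{5,(0,5)}$ or to $BN^1_3$) to an unspecified ``explicit class-level comparison'' with Logan's formulas. That step is exactly where the paper uses the second assertion of Proposition \ref{Negativity of Fibration}: neither $BN^1_{5,(0,5)}$ nor $BN^1_3$ moves, so no distinct irreducible codimension-one subvariety such as $BN^2_{6,(0,2,4)}$ can be numerically proportional to either of them. As written, your argument is incomplete at both of these points; to repair it you should import the numerical relation from \cite{Logan2} in place of the face claim, and invoke the ``does not move'' statement of Proposition \ref{Negativity of Fibration} in place of the unperformed class comparison.
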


\begin{proof}
By \cite{Logan2}, we know that $BN^2_{6,(0,2,4)}$ is numerically equivalent to a linear combination of $BN^1_{5,(0,5)}$ and $BN^1_3$.  On the other hand, if it is not numerically equivalent to a multiple of $BN^1_{4,(0,3)}$, then by the above argument it generates an extremal ray.  This would imply that $BN^2_{6,(0,2,4)}$ is linearly equivalent to a multiple of either $BN^1_{5,(0,5)}$ or $BN^1_3$.  But Proposition \ref{Negativity of Fibration} shows that neither of these divisors move, so this is impossible.
\end{proof}

\bibliography{ref}

\begin{thebibliography}{GKM02}

\bibitem[CHS08]{CHS}
Izzet Coskun, Joe Harris, and Jason Starr.
\newblock The effective cone of the {K}ontsevich moduli space.
\newblock {\em Canad. Math. Bull.}, 51(4):519--534, 2008.

\bibitem[CT09]{CT}
Ana~Maria Castravet and Jenia Tevelev.
\newblock Hypertrees, projections, and moduli of stable curves.
\newblock {\em preprint}, 2009.

\bibitem[Cuk89]{Cuk1}
Fernando Cukierman.
\newblock Families of {W}eierstrass points.
\newblock {\em Duke Math. J.}, 58(2):317--346, 1989.

\bibitem[DH98]{DH}
I.~V. Dolgachev and Y.~Hu.
\newblock Variation of geometric invariant theory quotients.
\newblock {\em Inst. Hautes \'Etudes Sci. Publ. Math.}, (87):5--56, 1998.

\bibitem[EH87]{HE}
David Eisenbud and Joe Harris.
\newblock The {K}odaira dimension of the moduli space of curves of genus {$\geq
  23$}.
\newblock {\em Invent. Math.}, 90(2):359--387, 1987.

\bibitem[Far09]{FarkasGG}
Gavril Farkas.
\newblock The global geometry of the moduli space of curves.
\newblock In {\em Algebraic geometry---{S}eattle 2005. {P}art 1}, volume~80 of
  {\em Proc. Sympos. Pure Math.}, pages 125--147. Amer. Math. Soc., Providence,
  RI, 2009.

\bibitem[GKM02]{GKM}
Angela Gibney, Sean Keel, and Ian Morrison.
\newblock Towards the ample cone of {$\overline M_{g,n}$}.
\newblock {\em J. Amer. Math. Soc.}, 15(2):273--294, 2002.

\bibitem[HK00]{HK}
Yi~Hu and Sean Keel.
\newblock Mori dream spaces and {GIT}.
\newblock {\em Michigan Math. J.}, 48:331--348, 2000.
\newblock Dedicated to William Fulton on the occasion of his 60th birthday.

\bibitem[HM82]{HMum}
Joe Harris and David Mumford.
\newblock On the {K}odaira dimension of the moduli space of curves.
\newblock {\em Invent. Math.}, 67(1):23--88, 1982.
\newblock With an appendix by William Fulton.

\bibitem[Jen11]{Genus3and4}
David Jensen.
\newblock Birational contractions of $\overline{M}_{3,1}$ and
  $\overline{M}_{4,1}$.
\newblock {\em Trans. Amer. Math. Soc.}, to appear, 2011.

\bibitem[Kol92]{Kollar}
{\em Flips and abundance for algebraic threefolds}.
\newblock Soci\'et\'e Math\'ematique de France, Paris, 1992.
\newblock Papers from the Second Summer Seminar on Algebraic Geometry held at
  the University of Utah, Salt Lake City, Utah, August 1991, Ast{\'e}risque No.
  211 (1992).

\bibitem[Log00]{Logan2}
Adam Logan.
\newblock Relations among divisors on the moduli spaces of curves with marked
  points.
\newblock {\em preprint}, 2000.

\bibitem[Log03]{Logan}
Adam Logan.
\newblock The {K}odaira dimension of moduli spaces of curves with marked
  points.
\newblock {\em Amer. J. Math.}, 125(1):105--138, 2003.

\bibitem[MFK94]{Mumford}
D.~Mumford, J.~Fogarty, and F.~Kirwan.
\newblock {\em Geometric invariant theory}, volume~34 of {\em Ergebnisse der
  Mathematik und ihrer Grenzgebiete (2) [Results in Mathematics and Related
  Areas (2)]}.
\newblock Springer-Verlag, Berlin, third edition, 1994.

\bibitem[Rul01]{Rulla}
William~Frederick Rulla.
\newblock {\em The birational geometry of moduli space {M}(3) and moduli space
  {M}(2,1)}.
\newblock ProQuest LLC, Ann Arbor, MI, 2001.
\newblock Thesis (Ph.D.)--The University of Texas at Austin.

\bibitem[Tar11]{Tarasca}
Nicola Tarasca.
\newblock Double points of plane models in $\overline{M}_{g,1}$.
\newblock {\em preprint}, 2011.

\bibitem[Tha96]{Thaddeus}
M.~Thaddeus.
\newblock Geometric invariant theory and flips.
\newblock {\em J. Amer. Math. Soc.}, 9(3):691--723, 1996.

\end{thebibliography}

\end{document}